\documentclass[11pt,eqno]{amsart}
\textwidth 169truemm
\textheight 226truemm
\oddsidemargin -1.0mm
\evensidemargin -1.0mm
\topmargin -10mm
\headsep 6mm
\footskip 11mm
\baselineskip 4.5mm



\usepackage{amsmath,amssymb,txfonts}
\usepackage{amssymb}
\usepackage{amsxtra}
\usepackage{amsmath}
\usepackage{txfonts}
\usepackage[greek,english]{babel}
 \usepackage{color}
\usepackage{slashed}

\usepackage[cp1252]{inputenc}

\usepackage{mathrsfs}
\usepackage{graphicx}

 \usepackage[active]{srcltx}

\allowdisplaybreaks

\pagestyle{myheadings}\markboth{YuQing Wang
}
{$p(x)$-laplace }

\baselineskip 4.5mm
\topmargin -0.8cm

\parindent=16pt
\def\rr{{\mathbb R}}

\def\fz{\infty}

\def\ez{\varepsilon}

\def\bz{\beta}

\def\Xint#1{\mathchoice	{\XXint\displaystyle\textstyle{#1}}	{\XXint\textstyle\scriptstyle{#1}}	{\XXint\scriptstyle\scriptscriptstyle{#1}}	{\XXint\scriptscriptstyle\scriptscriptstyle{#1}}	\!\int}\def\XXint#1#2#3{{\setbox0=\hbox{$#1{#2#3}{\int}$}	\vcenter{\hbox{$#2#3$}}\kern-.5\wd0}}\def\dashint{\Xint-}

\newtheorem{thm}{Theorem}[section]
\newtheorem{lem}{Lemma}[section]

\newtheorem{defn}{Definition}[section]

\numberwithin{equation}{section}

\begin{document}



\arraycolsep=1pt

\title{Second order Sobolev regularity for normalized parabolic 
$p(x)$-Laplace equations via the algebraic structure }

\author{Yuqing Wang and Yizhe Zhu}
\address{ Department of Mathematics, Beihang University, Beijing 100191, P.R. China}
                    \email{ yuqingwang@buaa.edu.cn\\yizhezhu@buaa.edu.cn}

\thanks{     }

\date{\today }
\maketitle

\begin{center}
\begin{minipage}{13.5cm}\small{\noindent{\bf Abstract}\quad
Denote by $\Delta$ the Laplacian and by $\Delta_\infty$ the $\infty$-Laplacian. A fundamental inequality is proved for the algebraic structure of $\Delta v\Delta_\infty v$: for every $v\in C^{\infty}$,
$$\bigg| |D^2vDv|^2-\Delta v\Delta_\infty v-\frac{1}{2}[|D^2v|^2-(\Delta v)^2]|Dv|^2\bigg|
\le\frac{n-2}{2}[|D^2v|^2|Dv|^2-|D^2vDv|^2]$$
Based on this, we prove the result: When $n\ge2$ and $p(x)\in(1,2)\cup(2,3+\frac{2}{n-2})$, the viscosity solutions to parabolic normalized $p(x)$-Laplace equation have the $W^{2,2}_{loc}$-regularity in the spatial variable and the $W^{1,2}_{loc}$-regularity in the time variable

}
{\bf key words} secong order regularity, $p(x)$-Laplacian, variable exponents, generalized Lebesgue and Sobolev spaces
\end{minipage}
\end{center}

\section{Introduction}
Denote by $\Delta$ and $\Delta_\infty$ the Laplacian and $\infty$-Laplacian, respective, in $\mathbb{R}^n$ with $n\ge2$, i.e.
$$\Delta v={\rm div}(Dv) \quad {\rm and}\quad \Delta_\infty=D^2vDv\cdot Dv$$
Recall that, the following identity in the plane
\begin{equation}\label{eq1.1}
|D^2vDv|^2-\Delta v\Delta_\infty v=\frac{1}{2}[|D^2v|^2-(\Delta v)^2]|Dv|^2
\end{equation}
is the key to the higher order Sobolev regularity of infinity harmonic function in the plane established in \cite{2019An}. In this paper, we generalize \eqref{eq1.1} to the higher dimension: For every $v\in C^{\infty}$
\begin{equation}\label{eq1.2}
\bigg| |D^2vDv|^2-\Delta v\Delta_\infty v-\frac{1}{2}[|D^2v|^2-(\Delta v)^2]|Dv|^2\bigg|
\le\frac{n-2}{2}[|D^2v|^2|Dv|^2-|D^2vDv|^2]
\end{equation}
see \cite{2019Second}.

It turns out that \eqref{eq1.2} is a basic tool to study the second order Sobolev regularity of normalized parabolic $p(x,t)$-Laplacian. Here, for $p(x,t)\in C^1(\Omega_T)$ 
and $1<p_{-}:={\rm inf}\ p(x,t)\le p(x,t)\le{\rm sup}\ p(x,t)=:p_{+}<\infty$. we consider the viscosity solution of the normalized $p(x,t)$-Laplace equation
\begin{equation}\label{eq1.3}
u_t-\Delta_{p(x,t)}^N u=0
\end{equation}
where $\Delta_{p(x,t)}^N$ is the normalized $p(x,t)$-Laplace operator defined as
$$\Delta_{p(x,t)}^N u:=\Delta u- (p(x,t)-2)\frac{\Delta_\infty u}{|Du|^2}$$
Throughout the whole paper, $\Omega$ is always a domain of $\mathbb{R}^n$
\begin{thm}
Let $n\ge2$, $f\in W^{1,\fz}(\Omega)$, $p(x,t)\in C^1(\Omega_T)$ and $p(x,t)\in(1,3+\frac{2}{n-2})$. Then for any vicosity solution $u=u(x,t)$ to
\begin{equation}\label{eq1.4}
u_t-\Delta_{p(x)}^N u=f \quad {\rm in} \quad \Omega_T:=\Omega\times(0,T)
\end{equation}
we have $u_t$, $D^2u\in L^2_{loc}(\Omega_T)$.
Moreover, $D(|Du|^{\frac{p(x)-2}2}Du)\in L_{loc}^2(\Omega_T)$ and
\begin{align}\label{neq1.5}
\dashint_{\frac12Q}|D[|Dv|^{\frac{p(x)-2}2}Dv]|^2dx\,dt\le &C(n,p^B_\pm)\dashint_{Q}|Dv|^{p(x)-1}|Df|\,dx\,dt+C(n,p^B_\pm)(\frac1R+\frac1{R^2})\dashint_{Q}|Dv|^{p(x)}\,dx\,dt+\dashint_Q|Du|^{p(x)-1}\ln Du.\nonumber
\end{align}
\end{thm}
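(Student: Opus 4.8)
\medskip
\noindent\emph{Proof sketch.} The plan is to establish \eqref{neq1.5} first as an a priori estimate, uniform in a regularization parameter $\varepsilon$, for smooth approximations of $u$, and then to pass to the limit; the algebraic inequality \eqref{eq1.2} is exactly what makes this a priori estimate close in the stated range of $p$. I would mollify $p(x,t)$ to $p_\varepsilon\in C^\infty$ with $p_-\le p_\varepsilon\le p_+$, and $f$ to $f_\varepsilon\in C^\infty$, and on a fixed parabolic cylinder $Q=Q_R\Subset\Omega_T$ solve the uniformly parabolic, non-degenerate quasilinear Cauchy--Dirichlet problem
$$\partial_tu_\varepsilon=\Delta u_\varepsilon+(p_\varepsilon-2)\,\frac{D^2u_\varepsilon\,Du_\varepsilon\cdot Du_\varepsilon}{|Du_\varepsilon|^2+\varepsilon}+f_\varepsilon\ \ \text{in}\ Q,\qquad u_\varepsilon=u\ \ \text{on}\ \partial_pQ.$$
By Schauder theory $u_\varepsilon\in C^\infty(Q)$, and by the stability of viscosity solutions together with the comparison principle for \eqref{eq1.4}, $u_\varepsilon\to u$ locally uniformly as $\varepsilon\to0$; by the available (uniform in $\varepsilon$) local Lipschitz and $C^{1,\alpha}$ estimates, $|Du_\varepsilon|\le L$ on $\tfrac12Q$ and $Du_\varepsilon\to Du$ locally uniformly, which lets one later pass from the weighted bound \eqref{neq1.5} to the unweighted conclusions.

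Next, fix $\eta\in C_c^\infty(Q)$ with $\eta\equiv1$ on $\tfrac12Q$, $|D\eta|\lesssim R^{-1}$, $|D^2\eta|+|\partial_t\eta|\lesssim R^{-2}$, write $g:=(|Du_\varepsilon|^2+\varepsilon)^{(p_\varepsilon-2)/2}$ and $V_\varepsilon:=(|Du_\varepsilon|^2+\varepsilon)^{(p_\varepsilon-2)/4}Du_\varepsilon$. I would differentiate the equation for $u_\varepsilon$ in $x_k$, multiply by $g\,\partial_ku_\varepsilon\,\eta^2$, sum over $k$, and integrate over $Q$; integrating by parts in $x$ (and in $t$ in the term carrying $\partial_t|Du_\varepsilon|^2$, whose time-boundary contributions vanish since $\eta$ is compactly supported) reproduces on one side the principal quantity $\int_Qg\big(|D^2u_\varepsilon|^2+(p_\varepsilon-2)|D^2u_\varepsilon Du_\varepsilon|^2/(|Du_\varepsilon|^2+\varepsilon)\big)\eta^2$, which for $p_\varepsilon>1$ is comparable to $\int_Q|DV_\varepsilon|^2\eta^2$, while the contributions from differentiating the coefficient $(p_\varepsilon-2)\partial_iu\partial_ju/(|Du|^2+\varepsilon)$ and the weight $g$ collect, after cancellations, into the single critical term
$$(p_\varepsilon-2)\int_Qg\,\frac{\Delta u_\varepsilon\,\Delta_\infty u_\varepsilon}{|Du_\varepsilon|^2+\varepsilon}\,\eta^2$$
together with terms of a definite sign in $(\Delta u_\varepsilon)^2$, $|D^2u_\varepsilon Du_\varepsilon|^2/|Du_\varepsilon|^2$ and $(\Delta_\infty u_\varepsilon)^2/|Du_\varepsilon|^4$, and genuinely lower-order terms carrying a factor $D\eta$, $\partial_t\eta$, $Df_\varepsilon$ or $Dp_\varepsilon$; the latter are dominated by the right-hand side of \eqref{neq1.5}, with the $Df_\varepsilon$-term giving $\dashint_Q|Du_\varepsilon|^{p_\varepsilon-1}|Df_\varepsilon|$, the cutoff terms giving $(R^{-1}+R^{-2})\dashint_Q|Du_\varepsilon|^{p_\varepsilon}$, and the $x$-derivative of $g$ producing the factor $\partial_kp_\varepsilon\ln(|Du_\varepsilon|^2+\varepsilon)$ and hence $\dashint_Q|Du_\varepsilon|^{p_\varepsilon-1}|\ln|Du_\varepsilon||$.

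Now I would apply \eqref{eq1.2} with $v=u_\varepsilon$, writing
$$\Delta u_\varepsilon\,\Delta_\infty u_\varepsilon=|D^2u_\varepsilon Du_\varepsilon|^2-\tfrac12\big[|D^2u_\varepsilon|^2-(\Delta u_\varepsilon)^2\big]|Du_\varepsilon|^2+R_\varepsilon,\qquad|R_\varepsilon|\le\tfrac{n-2}{2}\big[|D^2u_\varepsilon|^2|Du_\varepsilon|^2-|D^2u_\varepsilon Du_\varepsilon|^2\big].$$
The ``good'' part of $\Delta u_\varepsilon\Delta_\infty u_\varepsilon$ recombines with the principal term and the sign-definite terms, while the remainder $R_\varepsilon$ is controlled by $|D^2u_\varepsilon|^2|Du_\varepsilon|^2-|D^2u_\varepsilon Du_\varepsilon|^2$, itself a component of the principal term. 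Keeping track of all numerical coefficients — crucially retaining, rather than discarding, the $(\Delta u_\varepsilon)^2$, $|D^2u_\varepsilon Du_\varepsilon|^2/|Du_\varepsilon|^2$ and $(\Delta_\infty u_\varepsilon)^2/|Du_\varepsilon|^4$ contributions — one checks that $(p_\varepsilon-2)R_\varepsilon$ is absorbed into the principal term precisely when $p_\varepsilon<3+\tfrac2{n-2}$ (for $p_\varepsilon\in(1,2)$ the sign of $p_\varepsilon-2$ only helps, and for $n=2$ the inequality \eqref{eq1.2} degenerates to the identity \eqref{eq1.1}, consistently with $3+\tfrac2{n-2}=\infty$). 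This bounds $\int_{\frac12Q}|DV_\varepsilon|^2$ by the right-hand side of \eqref{neq1.5} uniformly in $\varepsilon$; the same circle of weighted and unweighted differentiated-equation estimates — admissible in the same range of $p$ — then also yields $\int_{\frac12Q}(|D^2u_\varepsilon|^2+|\partial_tu_\varepsilon|^2)\le C$ uniformly (for $\partial_tu_\varepsilon$ directly from the equation once $D^2u_\varepsilon\in L^2$). Finally I would pass to the limit: along a subsequence $D^2u_\varepsilon\rightharpoonup D^2u$, $\partial_tu_\varepsilon\rightharpoonup\partial_tu$, $DV_\varepsilon\rightharpoonup W$ weakly in $L^2_{\loc}(Q)$, and since $u_\varepsilon\to u$, $Du_\varepsilon\to Du$ locally uniformly one identifies $W=D(|Du|^{(p-2)/2}Du)$ in $\mathcal D'$; hence $u_t,D^2u\in L^2_{\loc}(\Omega_T)$, $D(|Du|^{(p-2)/2}Du)\in L^2_{\loc}(\Omega_T)$, and lower semicontinuity of the $L^2$ norm together with $|Du_\varepsilon|^{p_\varepsilon}\to|Du|^p$, $|Df_\varepsilon|\to|Df|$ in $L^1_{\loc}$ yields \eqref{neq1.5}, a covering argument handling general cylinders.

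The heart of the argument — and its main obstacle — is the step involving \eqref{eq1.2}: one must carry out the integration by parts so that every cubic-in-$D^2u_\varepsilon$ term organizes into $\Delta u_\varepsilon\Delta_\infty u_\varepsilon$, handled by \eqref{eq1.2}, together with sign-definite remainders, and then verify that the numerical coefficient surviving the use of \eqref{eq1.2} stays positive up to $p=3+\tfrac2{n-2}$; this bookkeeping genuinely uses the \emph{full} algebraic structure of \eqref{eq1.2} rather than a crude estimate, and is precisely what forces the restriction on $p$. Secondary difficulties are the $\varepsilon$-uniform control of the degeneracy set $\{Du_\varepsilon=0\}$ — absorbed by the regularization provided every constant is $\varepsilon$-free — and the extra terms generated by the spatial dependence of the exponent, which is where the hypotheses $p\in C^1$ and $f\in W^{1,\infty}$ enter and which account for the $|Du|^{p-1}|\ln|Du||$ term in \eqref{neq1.5}.
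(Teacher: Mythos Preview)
Your high-level plan (regularize, prove an $\varepsilon$-uniform a priori bound via \eqref{eq1.2}, pass to the limit) matches the paper, but the way you propose to obtain the a priori estimate is genuinely different and leaves real work unaccounted for. You differentiate the equation in $x_k$, test with $(|Du^\varepsilon|^2+\varepsilon)^{(p-2)/2}u^\varepsilon_{x_k}\eta^2$, and then invoke \eqref{eq1.2} on the emerging cross-term. The paper never differentiates the equation. For the unweighted bound on $D^2u^\varepsilon$ and $u^\varepsilon_t$ it applies \eqref{eq1.2} \emph{pointwise} to $u^\varepsilon$, divides by $|Du^\varepsilon|^2+\varepsilon$, and uses the equation to substitute $(p-2)\Delta_\infty u^\varepsilon/(|Du^\varepsilon|^2+\varepsilon)=u^\varepsilon_t-\Delta u^\varepsilon-f$, arriving at a pointwise inequality (their (3.3)) relating $|D^2u^\varepsilon|^2$, $(u^\varepsilon_t)^2$ and $|D^2u^\varepsilon|^2-(\Delta u^\varepsilon)^2$. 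After integration against a cutoff, the last quantity is handled by its divergence structure (Lemma~2.2), and the two genuinely problematic terms are the parabolic cross-term $(p-2)\Delta u^\varepsilon u^\varepsilon_t$ and the $\varepsilon$-term $\frac\varepsilon2(p-2)^2\frac{(\Delta u^\varepsilon)^2-|D^2u^\varepsilon|^2}{|Du^\varepsilon|^2+\varepsilon}$. These are not ``genuinely lower order'' as you describe them; they occupy Lemmas~3.2--3.7, are treated by separate arguments for $n\ge3$ and $n=2$, and in the $n\ge3$ case the $\varepsilon$-term produces a contribution $\frac14\frac{(p-2)^2}{p-1}(u^\varepsilon_t)^2$ that blows up as $p\to1$. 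Consequently the direct argument only closes for $p\in(n+2-\sqrt{n^2+2n},\,3+\tfrac2{n-2})$, and the full range $p>1$ is obtained by a dummy-variable trick (adding extra spatial coordinates and letting $m\to\infty$). Your assertion that ``for $p_\varepsilon\in(1,2)$ the sign of $p_\varepsilon-2$ only helps'' does not address this obstruction.

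For the weighted conclusion $D\big[(|Du|^2+\varepsilon)^{(p-2)/4}Du\big]\in L^2$, the paper again does not follow your differentiated-equation route: it proves a pointwise lower bound (Lemma~4.1) on $(|Dv|^2+\varepsilon)^{(p-2)/2}|D^2v|^2$ by the second symmetric function $\sigma_2$ of $D[(|Dv|^2+\varepsilon)^{(p-2)/4}Dv]$ plus a weighted $(v_t-f)^2$, and then uses the integration-by-parts identity for $\sigma_2$ (Lemma~2.3). Your scheme may be workable, but the claim that the integration by parts collapses everything to the single critical term $(p-2)\int g\,\Delta u^\varepsilon\Delta_\infty u^\varepsilon/(|Du^\varepsilon|^2+\varepsilon)$ plus sign-definite pieces is not substantiated: a direct computation also produces $(p-2)^2\int g\,(\Delta_\infty u^\varepsilon)^2/(|Du^\varepsilon|^2+\varepsilon)^2$ and further $\varepsilon$-terms, and the coefficient bookkeeping that fixes the threshold $3+\tfrac2{n-2}$ has to be redone in your setting before the argument can be considered complete.
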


\large\textbf {1.1 $p(x)$-Laplace equation and its normalization}\\

To start with, we consider the $p(x)$-Laplace equation
$$\Delta_{p(x)}u:={\rm div}(|Du|^{p(x)-2}Du)=0$$

It's natural generalization of the $p$-Laplace operator, and  is considered first by Zhikov in \cite{1987Averaging}. 
The existence of the $p(x)$-Laplace equation is proved by Fan in \cite{2003Existence} when $p(x)\in C^0$. 
Petri, Teemu and Mikko proved the equivalence of viscosity and weak solutions for the $p(x)$-Laplace equation, Harnak's inequality and comparison principle in \cite{Petri2010Equivalence} when $p(x)\in C^1$. 
Moreover, Zhang proved the strong maximum principle for the equations with nonstandard $p(x)$-growth conditions in \cite{2005A}.

For the normalized $p(x)$-Laplace equation
$$\Delta_{p(x)}^Nu:=\Delta u+(p(x)-2)\frac{\Delta_\infty u}{|Du|^2}$$
There has been recent interest in normalized equations, see for example\cite{ISI000404699000004, 2015Modica}. We are partly motivated by the connection to stochastic tug-of-war games\cite{2011Tug} as the case of space dependent probabilities leads to\cite{2016Tug}.

There are some regularity results, when $p(x)$ is Lipschitz continuous, Siltakoski showed that viscosity solution is locally $C^{1,\alpha}$ in \cite{2018Equivalence}, by means of equivalence between viscosity
solutions and weak solutions to strong $p(x)$-Laplace equation
$$\Delta_{p(x)}^Su:=|Du|^{p(x)-2}\Delta_{p(x)}^Nu.$$ The local $ C^{1,\alpha} $ regularity of weak solutions of strong $p(x)$-Laplace equation has been obtained by Zhang-Zhou in \cite{2012Holder} when $p(x)\in\mathcal{P}^{log}$. For the approximation equation
$$-{\rm div}[(\kappa+|Du|^2)^{\frac{p(x)-2}{2}}Du]=f$$
It's known to have $C^{1,\alpha}_{loc}$ regularity (see\cite{2001Regularity,1999H} for $f=0$ and \cite{2007Global} for a
more general framework). Moreover, when $p(x)$ is Lipschitz continuous,  Challal and Lyaghfouri proved the $W^{2,2}_{loc}$ regularity when $\kappa>0$ and  $W^{2,2}_{loc}(p[\cdot\le2])$ when $\kappa=0$ in\cite{2011Second}

\large\textbf {1.2 Parabolic $p(x,t)$-Laplace equation and its normalization}

More recently, parabolic problems with variable exponents have been studied quite extensively, see for example \cite{2010Vanishing,2007On}.
The importance of investigating these problems lies in their occurrence in
modeling various physical problems involving strong anisotropic phenomena
related to electrorheological fluids\cite{2004Regularity}, image processing\cite{Li2010Variable} and also mathematical biology\cite{2010Positive}. 

We call
$$u_t-\Delta_{p(x,t)}u=0$$
parabolic $p(x,t)$-Laplace equation. It's natural generalization of the parabolic $p$-Laplace equation. In \cite{2016Quasilinear}, Giacomoni, Tiwari and Warnault discuss the existence, uniqueness and stabilization of weak solution provided that $p(x)\in\mathcal{P}^{log}$. For self improving property of higher integrability, Antontsev and Zhikov proved it when $p(x)\in\mathcal{P}^{loc}$ and $\frac{2n}{n+2}<p_{-}\le p(x,t)\le p_{+}<\infty$ in \cite{2005Higher}. For H\"{o}lder regularity, Shmarev proved that $u\in C^{1,\frac{1}{2}}(\Omega_T\cap \{t>0\})$ when $p(x,t)$ is H\"{o}lder continuous and $\frac{2n}{n+2}\le p(x,t)\le p_{+}<\infty$ in\cite{2018On}. Moreover, Mengyao, Chao and Shulin established global boundedness and H\"{o}lder regularity of the weak solution to a general $p(x,t)$-Laplace parabolic equation see\cite{2020Global}. For Calder\'{o}n-Zygmund estimate, Baroni and B\"{o}gelein established it in \cite{2014Calder} when $\frac{2n}{n+2}<p_{-}\le p(x,t)\le p_{+}<\infty$ and $p(x,t)\in\mathcal{P}^{log}$.

We call
$$u_t-\Delta_{p(x,t)}^Nu=0$$
normolized parabolic $p(x,t)$-Laplace equation. It arises naturally from a two-player zero-sum stochastic differential game (SDG) with probabilities depending on space and time, please see\cite{2016Local}. Parviainen-Ruosteenoja, in \cite{2016Local} proved the H\"{o}lder and Harnack estimates for a more general game that was called $p(x,t)$-game without using the PDE techniques and showed that the value functions of the game converge to the unique viscosity solution of the Dirichlet problem to the normalized $p(x,t)$-parabolic equation

$$(n+p(x,t))u_t(x,t)=\Delta_{p(x)}^Nu(x,t)$$
Moreover, Chao estabalished the interior H\"{o}lder regularity of the spatial gradient of viscosity solutions in \cite{2020Gradient}.

For $W^{2,2}$ regularity, Lindqvist \cite{2018Regularity} proved the $W^{2,2}_{loc}$ regularity in the spatial variables for viscosity sulotions to 
$$u_t-\Delta_pu=0$$
when $\frac{6}{5}<p<\frac{14}{5}$ and the $W^{1,2}_{loc}$ regularity in the time variable when $1<p<\frac{14}{5}$. Thier merhod is not capable to reach all $p\in(0,\infty)$. They also explained that, through the Cordes condition(see\cite{Maugeri2000Elliptic}), it is possible to get analogue results for $p$ in some restricted range but not all $p\in(1,\infty)$, mainly since the absence of zero boundary values produces many undesired
terms are hard to esitimates. Indeed, by the parabolic version of the Cordes condition, the only possible range to get the $W^{2,2}_{loc}$ regularity is $p\in(1,3+\frac{2}{n-1})$. Hongjie, Fa, Yi and Yuan in \cite{2019Second} improved the range of $p$ and get the $W^{2,2}_{loc}$ regularity in the spatial variables and $W^{1,2}_{loc}$ in the time variable when $1<p<3+\frac{2}{n-2}$. 

Additionally, the question was raised naturally:

{\em When $1<p(x,t)<3+\frac{2}{n-2}$, whether viscosity solutions to (1.3) enjoy the $W^{2,2}_{loc}$ regularity in spatial variables and $W^{1,2}_{loc}$ in the time variable}

\large\textbf {1.2 Ideas of the proof}\\
Theorem 1.1 is proved in section 3. Let $u=u(x,t)$ be a viscosity solution to\eqref{eq1.4}. Given any smooth domain
$U\subset\subset\Omega$, for $\epsilon\in(0,1]$ we let $u^\epsilon\in C^{1,\alpha}$ be a viscosity solution to the
regularized equation
\begin{equation}\label{eq1.5}
u^\epsilon_t-\Delta u^\epsilon-(p(x)-2)\frac{\Delta_\infty u^\epsilon}{|Du^\epsilon|^2+\epsilon}=0
\quad {\rm in} \quad U; \quad u=u^\epsilon\quad {\rm on} \ \partial U_T
\end{equation}
Applying\eqref{eq1.2} to $u^\epsilon$, one gets
\begin{align*}
&\frac{n}{2}(p(x)-2)^2\frac{|D^2u^{\epsilon}Du^{\epsilon}|^2}{|Du^{\epsilon}|^2+\epsilon}+[(p(x)-2)-\frac{n-2}{2}(p(x)-2)^2](\Delta u^{\epsilon})^2\\
&\quad\le\frac{n-1}{2}(p(x)-2)^2[|D^2u^{\epsilon}|^2-(\Delta u^{\epsilon})^2]\\
&\qquad+\frac{\epsilon}{2}(p(x)-2)^2\frac{(\Delta u^{\epsilon})^2-|D^2u^{\epsilon}|^2}{|Du^{\epsilon}|^2+\epsilon}
+(p(x)-2)\Delta u^{\epsilon}u_t^\epsilon \quad{\rm in} \ U_T
\end{align*}
Here we have the term
$$(p(x)-2)\Delta u^{\epsilon}u_t^\epsilon$$
from the parabolic structure, and also an annoying term
$$\frac{\epsilon}{2}(p(x)-2)^2\frac{(\Delta u^{\epsilon})^2-|D^2u^{\epsilon}|^2}{|Du^{\epsilon}|^2+\epsilon}$$
from the approximation procedure, either of which cannot be removed. With additional ideas and careful calculations, we bound such two terms when $p(x)$ just fluctuates within a certain range, if not we select a smooth function vanishing out the range.
(See section 3 by considering two cases via different methods)
\section{Preliminaries}
Because \eqref{eq1.3} is not in divergence form, the concept of weak solutions with test functions under the integral sign is problematic. Thus, in this section we first recall the definition of viscosity solution to \eqref{eq1.4}.
\begin{defn}
A lower (resp.upper) semicontinuous function u in $\Omega $ is a viscosity supersolution (resp.subsolution) to \eqref{eq1.4},
if for any $\phi\in C^2(\Omega)$, $u-\phi$ reaches the local minimum at $(x_0,t_0)\in\Omega$,
then when $D\phi(x_0,t_0)\neq0$, it holds that
$$\phi_t\ge(\le,resp.)\Delta\phi+(p(x)-2)\bigg<D^2\phi \frac{D\phi}{|D\phi|}, \frac{D\phi}{|D\phi|}\bigg>$$
at $(x_0,t_0)$; when $D\phi(x_0,t_0)=0$, it holds that
$$\phi_t\ge(\le,resp.)\Delta\phi+(p(x)-2)\big<D^2\phi q, q\big>$$
at $(x_0,t_0)$ for some $q\in \overline{B_1(0)}\subset\mathbb{R}^n$. A function u is a viscosity solution to \eqref{eq1.4} if and only if it is both viscosity supsolution and subsolution.
\end{defn}
Next, we state some known results about structures for $\Delta v\Delta_\infty v$ and $|D^2v|^2-(\Delta v)^2$,
which will play a center role in this paper. We begin with the following lemmas(see\cite{2019Second})
\begin{lem}
Let $n\ge2$ and $U$ be a domain on $\mathbb{R}^n$. For any $v\in C^{\infty}$, we have
\begin{align*}
\bigg| |D^2vDv|^2-\Delta v\Delta_\infty v-&\frac{1}{2}[|D^2v|^2-(\Delta v)^2]|Dv|^2\bigg|\\
&\le\frac{n-2}{2}[|D^2v|^2|Dv|^2-|D^2vDv|^2]\tag{2.1}
\end{align*}
\end{lem}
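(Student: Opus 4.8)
The plan is to reduce the inequality to a pointwise statement at an arbitrary fixed point $x_0$ and then to a pure linear-algebra fact about a symmetric matrix and a vector. Fix $x_0$ and set $A := D^2v(x_0)$, a real symmetric $n\times n$ matrix, and $b := Dv(x_0)\in\rn$. Then $\Delta v = \operatorname{tr}A$, $\Delta_\infty v = \langle Ab,b\rangle$, $|D^2v|^2 = \operatorname{tr}(A^2)$, $|D^2vDv|^2 = |Ab|^2$, and $|Dv|^2 = |b|^2$. So the claim becomes: for every symmetric $A$ and every $b\in\rn$,
\begin{align*}
\Bigl| |Ab|^2 - (\operatorname{tr}A)\langle Ab,b\rangle - \tfrac12\bigl[\operatorname{tr}(A^2) - (\operatorname{tr}A)^2\bigr]|b|^2 \Bigr| \le \frac{n-2}{2}\bigl[\operatorname{tr}(A^2)\,|b|^2 - |Ab|^2\bigr].\tag{$\ast$}
\end{align*}
The right-hand side is nonnegative by Cauchy--Schwarz ($|Ab|^2\le \operatorname{tr}(A^2)|b|^2$, since $\operatorname{tr}(A^2)=|A|^2$ is the squared Frobenius norm), so ($\ast$) is a genuine two-sided bound.

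Next I would diagonalize. Since $A$ is symmetric, pick an orthonormal eigenbasis with eigenvalues $\lambda_1,\dots,\lambda_n$ and write $b$ in that basis with coordinates $b_1,\dots,b_n$; normalizing, we may assume $|b|^2=\sum_i b_i^2 = 1$ (both sides of ($\ast$) are homogeneous of degree $2$ in $b$). All quantities become symmetric functions: $\operatorname{tr}A=\sum_i\lambda_i$, $\langle Ab,b\rangle=\sum_i\lambda_i b_i^2$, $\operatorname{tr}(A^2)=\sum_i\lambda_i^2$, $|Ab|^2=\sum_i\lambda_i^2 b_i^2$. I expect the key algebraic identity to be that the ``defect'' $|Ab|^2 - (\operatorname{tr}A)\langle Ab,b\rangle - \tfrac12[\operatorname{tr}(A^2)-(\operatorname{tr}A)^2]$ can be rewritten, using $\sum_i b_i^2=1$, as a quadratic form in the $\lambda_i$ with coefficients involving the $b_i^2$; expanding $(\operatorname{tr}A)^2 = \sum_i\lambda_i^2 + \sum_{i\ne j}\lambda_i\lambda_j$ and regrouping, one should land on an expression like $-\tfrac12\sum_{i\ne j}(\lambda_i-\lambda_j)(\lambda_i b_i^2 - \text{something})$ or, after symmetrizing the double sum, a combination of the off-diagonal terms $\sum_{i<j}(\lambda_i-\lambda_j)^2\,b_i^2 b_j^2$ and $\sum_{i<j}\lambda_i\lambda_j(b_i^2+b_j^2-\cdots)$. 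Simultaneously, $\operatorname{tr}(A^2)|b|^2 - |Ab|^2 = \sum_i\lambda_i^2(1-b_i^2) = \sum_i\sum_{j\ne i}\lambda_i^2 b_j^2 = \sum_{i<j}(\lambda_i^2+\lambda_j^2)b_i^2 b_j^2$, which I would further split as $\sum_{i<j}b_i^2 b_j^2[(\lambda_i-\lambda_j)^2 + 2\lambda_i\lambda_j]$. The inequality ($\ast$) should then follow term-by-term from elementary estimates on each pair $(i,j)$, using $|2\lambda_i\lambda_j|\le \lambda_i^2+\lambda_j^2$ and the constraint that there are $n$ indices (this is where the factor $n-2$ enters — the ``$-2$'' reflects that two of the coordinate directions are ``used up'' by the pair $i,j$ itself, while the remaining $n-2$ directions absorb the rest).

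The main obstacle is the bookkeeping in step two: identifying the exact symmetric-function identity that expresses the defect purely in terms of off-diagonal pair contributions, so that the desired bound decouples across pairs $(i,j)$. Once the left-hand side and right-hand side of ($\ast$) are both written as sums over $i<j$ of weights $b_i^2 b_j^2$ times quadratics in $(\lambda_i,\lambda_j)$ (plus possibly a ``diagonal'' remainder that vanishes or has a favorable sign), the per-pair inequality is a routine $2$-variable estimate. I would also double-check the extremal/equality cases — $n=2$ (where the right side is zero and ($\ast$) must reduce to the planar identity \eqref{eq1.1}), and $b$ an eigenvector of $A$ — to fix the constants and make sure no slack is lost. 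Finally, since $x_0$ was arbitrary and $v\in C^\infty$, the pointwise bound ($\ast$) holds everywhere on $U$, which is exactly (2.1).
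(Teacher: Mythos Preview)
The paper does not actually prove Lemma 2.1; it is quoted from \cite{2019Second} without argument, so there is no in-paper proof to compare against. Your overall strategy---freeze a point, reduce to a linear-algebra inequality for the symmetric matrix $A=D^2v$ and the vector $b=Dv$, diagonalize, and estimate in terms of eigenvalues---is the natural one and does go through.

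That said, two of the concrete formulas you wrote down are wrong and would derail the bookkeeping you anticipate. First, with $|b|^2=1$,
\[
\operatorname{tr}(A^2)-|Ab|^2=\sum_{i\ne j}\lambda_i^2 b_j^2=\sum_{i<j}\bigl(\lambda_i^2 b_j^2+\lambda_j^2 b_i^2\bigr),
\]
\emph{not} $\sum_{i<j}(\lambda_i^2+\lambda_j^2)b_i^2 b_j^2$. Second, the left-hand defect is not a $b_i^2b_j^2$-weighted pair sum either: using $(\operatorname{tr}A)\langle Ab,b\rangle=\sum_i\lambda_i^2b_i^2+\sum_{i\ne j}\lambda_i\lambda_j b_i^2$ and $\operatorname{tr}(A^2)-(\operatorname{tr}A)^2=-2\sum_{i<j}\lambda_i\lambda_j$, one finds (still with $|b|=1$)
\[
|Ab|^2-(\operatorname{tr}A)\langle Ab,b\rangle-\tfrac12\bigl[\operatorname{tr}(A^2)-(\operatorname{tr}A)^2\bigr]
=\sum_{i<j}\lambda_i\lambda_j\bigl(1-b_i^2-b_j^2\bigr)
=\sum_{i<j}\lambda_i\lambda_j\!\!\sum_{k\notin\{i,j\}}\! b_k^2.
\]
From here the proof is immediate: apply $|\lambda_i\lambda_j|\le\tfrac12(\lambda_i^2+\lambda_j^2)$ termwise and observe that in the resulting triple sum each product $\lambda_l^2 b_k^2$ with $l\ne k$ is counted exactly $n-2$ times (once for each choice of the remaining index in $\{i,j\}\setminus\{l\}$ avoiding $k$). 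This collapses the bound to $\tfrac{n-2}{2}\sum_{l\ne k}\lambda_l^2 b_k^2$, which is exactly the right-hand side of $(\ast)$, and also recovers the planar identity when $n=2$. So your plan is sound, but the decomposition that makes it work runs through triples $(i,j,k)$, not through $b_i^2b_j^2$-weighted pairs.
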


\begin{lem}
For any $v\in C^{\infty}$, $\phi\in C^\infty_c(U)$ and $c\in\mathbb{R}^n$, we have
\begin{align*}
\bigg| \int_{U}[|D^2v|^2-(\Delta v)^2]\phi^2dx\bigg|\le C\int_{U}|Dv-c|^2[|\phi||D^2\phi|+|D\phi|^2]\tag{2.2}
\end{align*}
\end{lem}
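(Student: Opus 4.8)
The plan is to use that $|D^2v|^2-(\Delta v)^2$ equals, up to the factor $-2$, the second elementary symmetric function of the Hessian of $v$, hence is a null Lagrangian that can be written as a divergence in which $v$ enters only through $Dv$. The first step is to verify, by a direct computation (with the shorthand $v_i=\partial_i v$, $v_{ij}=\partial_i\partial_j v$), the pointwise identity
$$|D^2v|^2-(\Delta v)^2=\sum_{i,j=1}^n\partial_i\big(v_j v_{ij}-v_i v_{jj}\big),$$
where, after summing in both indices, the two cross terms $\sum_j v_j\,\partial_j\Delta v$ and $\sum_i v_i\,\partial_i\Delta v$ cancel. The second step is the observation that for any constant vector $c\in\mathbb{R}^n$ one may replace $v_j$ by $v_j-c_j$ and $v_i$ by $v_i-c_i$ on the right-hand side without changing its value, since the discrepancy is $\sum_j c_j\,\partial_j\Delta v-\sum_i c_i\,\partial_i\Delta v=0$; this is precisely what lets the arbitrary constant $c$ show up in the final bound.

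Having set this up, I would multiply the identity by $\phi^2$, integrate over $U$, and integrate by parts once; since $\phi\in C^\infty_c(U)$ there is no boundary term, and one arrives at
$$\int_U[|D^2v|^2-(\Delta v)^2]\,\phi^2\,dx=-2\sum_{i,j}\int_U\big((v_j-c_j)v_{ij}-(v_i-c_i)v_{jj}\big)\phi\,\partial_i\phi\,dx=:-2(A-B).$$
The remaining work is a second integration by parts that removes the Hessian of $v$ still present in $A$ and $B$. For $A=\sum_{i,j}\int(v_j-c_j)v_{ij}\,\phi\,\partial_i\phi$ I would write $(v_j-c_j)v_{ij}=\frac12\partial_i[(v_j-c_j)^2]$ and integrate by parts in $x_i$, which gives $A=-\frac12\sum_{i,j}\int(v_j-c_j)^2\big[(\partial_i\phi)^2+\phi\,\partial_{ii}\phi\big]$, already of the asserted form. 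For $B=\sum_{i,j}\int(v_i-c_i)v_{jj}\,\phi\,\partial_i\phi$ I would write $v_{jj}=\partial_j(v_j-c_j)$ and integrate by parts in $x_j$; expanding $\partial_j\big[(v_i-c_i)\phi\,\partial_i\phi\big]$ reproduces exactly $-A$ together with a remainder $R=\sum_{i,j}\int(v_i-c_i)(v_j-c_j)\big[\partial_i\phi\,\partial_j\phi+\phi\,\partial_{ij}\phi\big]$, so $B=-A-R$ and the left-hand side equals $-4A-2R$. Estimating $|A|$ and $|R|$ term by term, using $\sum_j|v_j-c_j|^2=|Dv-c|^2$, Cauchy--Schwarz on the remaining index sums, and $|\partial_{ii}\phi|,|\partial_{ij}\phi|\le|D^2\phi|$ (which is where the dimensional constant $C$ enters), yields the claimed inequality.

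Everything is elementary; the only place that needs care is the bookkeeping of the two integrations by parts --- in particular noticing the self-referential cancellation that makes $B$ return $-A$ rather than a fresh term carrying uncontrolled second derivatives of $v$ --- together with the point that the constant $c$ can be inserted for free thanks to the divergence structure. No density, compactness, or regularity input beyond $v\in C^\infty$ and $\phi\in C^\infty_c(U)$ is needed; should one later want the inequality for less regular $v$, it follows by approximation.
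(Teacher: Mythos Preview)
Your argument is correct: the divergence identity $|D^2v|^2-(\Delta v)^2=\sum_{i,j}\partial_i\big((v_j-c_j)v_{ij}-(v_i-c_i)v_{jj}\big)$ is valid for any constant $c$, and the two integrations by parts you describe do yield $-4A-2R$ with both $A$ and $R$ bounded by $C\int|Dv-c|^2(|D\phi|^2+|\phi||D^2\phi|)\,dx$. The only minor remark is that in estimating $A$ you use $\sum_i|\phi_{ii}|\le\sqrt{n}\,|D^2\phi|$, which is where the dimensional constant enters, exactly as you say.

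As for comparison: the paper does not give its own proof of this lemma. It is quoted in Section~2 as a known preliminary from \cite{2019Second} (Dong--Peng--Zhang--Zhou), so there is nothing to compare against beyond noting that your approach is precisely the standard one used in that reference --- exploiting the null-Lagrangian structure of $\sigma_2(D^2v)=\tfrac12[(\Delta v)^2-|D^2v|^2]$ and integrating by parts twice.
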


\begin{lem}\label{lem2.3}
Let   $\bz\in\rr$ and $\ez>0$.
For any   $v\in C^2(\Omega)$, any $\psi\in C_c^\fz(\Omega) $ and any vector $\vec{c}\in\rr^n$, we have

\begin{align}\label{id2.4}
&\int_\Omega\sigma_2(D[(|Dv|^2+\ez)^{{\bz/2}}Dv])\psi\,dx\\
&\quad =\int_\Omega\sum\limits_{1\leq i<j\leq n} (|Dv|^2+\ez)^{\bz/2 }v_{x_i}-c_i][(|Dv|^2+\ez)^{\bz/2 }v_{x_j}]_{x_j}\psi_{x_i}\,dx\nonumber\\
&\quad\quad -\int_\Omega\sum\limits_{1\leq i<j\leq n} (|Dv|^2+\ez)^{\bz/2 }v_{x_i}-c_i][(|Dv|^2+\ez)^{\bz/2 }v_{x_j}]_{x_i}\psi_{x_j}\,dx.\nonumber
\end{align}
\end{lem}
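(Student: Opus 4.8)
The plan is to read \eqref{id2.4} as the standard ``null-Lagrangian'' (divergence-form) identity for the $2\times2$ minors of a Jacobian matrix, applied to the vector field $w:=(|Dv|^2+\ez)^{\bz/2}Dv$, whose components are $w_i=(|Dv|^2+\ez)^{\bz/2}v_{x_i}$; I write $Dw$ for its Jacobian, $(Dw)_{ij}=(w_j)_{x_i}$, so that with the usual convention $\sigma_2(A)=\sum_{i<j}(A_{ii}A_{jj}-A_{ij}A_{ji})$ one has $\sigma_2(Dw)=\sum_{1\le i<j\le n}[(w_i)_{x_i}(w_j)_{x_j}-(w_i)_{x_j}(w_j)_{x_i}]$. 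Since $\ez>0$ removes the only possible singularity (at $Dv=0$), $w$ is one derivative smoother than $v$; in particular $\sigma_2(Dw)$ and every term on the right-hand side of \eqref{id2.4} are continuous for $v\in C^2$, so it suffices to prove the identity for $v\in C^\fz$ and then pass to the limit.

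First I would establish, for $w\in C^\fz$, the pointwise identity: for each pair $i<j$ and each constant $c_i\in\rr$,
$$(w_i)_{x_i}(w_j)_{x_j}-(w_i)_{x_j}(w_j)_{x_i}=\big[(w_i-c_i)(w_j)_{x_j}\big]_{x_i}-\big[(w_i-c_i)(w_j)_{x_i}\big]_{x_j},$$
which follows by expanding the right-hand side and cancelling the mixed second derivatives $(w_j)_{x_ix_j}=(w_j)_{x_jx_i}$ (the $c_i$-terms drop out for the same reason). Summing over $i<j$ displays $\sigma_2(Dw)$ as a sum of divergences, with $\vec c\in\rr^n$ arbitrary.

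Next I would multiply by $\psi\in C_c^\fz(\boz)$ and integrate over $\boz$; since $\psi$ has compact support, each derivative can be moved off the divergence with no boundary term, so that $\int_\boz\{[(w_i-c_i)(w_j)_{x_j}]_{x_i}-[(w_i-c_i)(w_j)_{x_i}]_{x_j}\}\psi\,dx=-\int_\boz(w_i-c_i)\{(w_j)_{x_j}\psi_{x_i}-(w_j)_{x_i}\psi_{x_j}\}\,dx$. Summing over $i<j$ and restoring $w_i=(|Dv|^2+\ez)^{\bz/2}v_{x_i}$ gives \eqref{id2.4} (up to checking the overall sign against the chosen convention for $\sigma_2$). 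To lift the smoothness hypothesis, given $v\in C^2(\boz)$ and $\psi$ I would mollify $v$ to get $v_\eta\to v$ in $C^2$ on a fixed neighbourhood of $\operatorname{supp}\psi$; since $\ez>0$, the map $(Dv,D^2v)\mapsto Dw$ is continuous, hence $Dw_\eta\to Dw$ and $\sigma_2(Dw_\eta)\to\sigma_2(Dw)$ uniformly on $\operatorname{supp}\psi$, and passing to the limit in the identity for $v_\eta$ yields it for $v$.

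This is a computational lemma with little conceptual content, so the ``hard part'' is only (i) tracking indices and signs carefully in the non-symmetric $\sigma_2$, and (ii) the mild regularity gap — the divergence identity of the second step is pointwise valid only for $w\in C^2$ (morally $v\in C^3$), whereas the statement allows $v\in C^2$ — which the mollification step repairs, using $\ez>0$ in an essential way.
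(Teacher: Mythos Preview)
The paper does not actually supply a proof of this lemma: it is listed in Section~2 among the preliminary results quoted without argument (alongside Lemmas~2.1, 2.2 and 2.4), so there is no ``paper's proof'' to compare against. Your approach is the standard one for this kind of null-Lagrangian identity and is correct: write $w_i=(|Dv|^2+\ez)^{\bz/2}v_{x_i}$, observe that each $2\times 2$ minor $(w_i)_{x_i}(w_j)_{x_j}-(w_i)_{x_j}(w_j)_{x_i}$ is a divergence (with the free constants $c_i$ inserted at no cost because the mixed second partials of $w_j$ cancel), integrate against $\psi\in C_c^\fz(\boz)$, and pass to the limit from smooth $v$ using that $\ez>0$ keeps $w$ as regular as $v$. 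Your caveat ``up to checking the overall sign'' is prudent: as written, the paper's display \eqref{id2.4} has at least one typographical slip (an unmatched bracket in each summand), and the sign you obtain from integration by parts is opposite to the one printed there; this is a matter of convention/typo rather than a flaw in your argument.
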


We also need the following Gehring's lemma; see \cite{MPS2000}.
\begin{lem}\label{lem4}

Let $F$ and $G$ be nonnegative functions in $\Omega$ with $ F \in L^r(\Omega), G \in L^s(\Omega)$ for some $1<r<s$. 
Suppose that  
$$
\dashint_{B}(F(x))^r d x \leq C\left(\dashint_{2B} F(x) d x\right)^r+C \dashint_{2B}(G(x))^r d x,  
$$
 for any ball $B $ with $2B\Subset \Omega$.
Then there exists $\ez \in(0, s-r)$ such that
$$
\left(\dashint_{B}(F(x))^p d x\right)^{1 / p} \leq K\left(\dashint_{2B}(F(x))^r d x\right)^{1 / r}+K\left(\dashint_{2B}(G(x))^p d x\right)^{1 / p}
$$
for every $p \in[r, r+\ez)$, where the constants $K$ and $\ez$ depend only on $C, r, s$ and $n$.

\end{lem}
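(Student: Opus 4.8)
The plan is to recognize this as the classical Gehring self-improvement of a reverse H\"{o}lder inequality with a perturbation term, and to prove it by the Calder\'{o}n--Zygmund stopping-time argument of Giaquinta--Modica. Since balls and cubes are comparable up to dimensional constants, I would first reduce to the cube version: it suffices to prove the analogous estimate on a fixed cube $Q_0$ having a suitable dilate compactly contained in $\Omega$, in place of $B$, and then recover the statement for $B$ by a finite covering. Write $g:=F$, $f:=G$, $q:=r$; the goal becomes $g\in L^p_{\loc}$ for $p\in[r,r+\epsilon)$ with the asserted bound, and I would fix $\epsilon:=p-r$, to be constrained to be small only at the very end.

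The first main step is to upgrade the hypothesis into a good distributional inequality. Fixing a threshold $t_0\simeq(\dashint_{Q_0}g^q)^{1/q}+(\dashint_{Q_0}f^q)^{1/q}$, for each $t>t_0$ I would apply the Calder\'{o}n--Zygmund decomposition of $g^q$ on a concentric sub-cube of $Q_0$ at level $t^q$, producing pairwise disjoint dyadic cubes $Q_j$ with $t^q<\dashint_{Q_j}g^q\le2^n t^q$ and $g\le t$ a.e.\ off $\bigcup_j Q_j$, so that $\{g>t\}\subset\bigcup_j Q_j$ modulo a null set. On each $Q_j$ I would invoke the reverse H\"{o}lder hypothesis on the dilate $2Q_j$ and compare it with $\dashint_{Q_j}g^q>t^q$: one of the terms $C(\dashint_{2Q_j}g)^q$, $C\dashint_{2Q_j}f^q$ must exceed $t^q/2$, and splitting $g$ (resp.\ $f$) at height $\eta t$ for a small $\eta\simeq C^{-1/q}$ and using that the average over $2Q_j$ is large then gives $|Q_j|\lesssim t^{-1}\int_{2Q_j\cap\{g>\eta t\}}g$ in the first case and $|Q_j|\lesssim t^{-q}\int_{2Q_j\cap\{f>\eta t\}}f^q$ in the second. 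Summing over $j$, using $\dashint_{Q_j}g^q\le2^n t^q$ to return from $|Q_j|$ to $\int_{Q_j}g^q$ and the bounded overlap of the $\{2Q_j\}$, I expect to reach
\begin{equation*}
\int_{\{g>t\}}g^q\,dx\ \le\ c_1\,t^{q-1}\!\!\int_{\{g>\eta t\}}\!\!g\,dx\ +\ c_2\!\!\int_{\{f>\eta t\}}\!\!f^q\,dx ,\qquad t>t_0 ,
\end{equation*}
with $c_1,c_2,\eta$ depending only on $C,q,n$ and all integrals taken over a fixed cube comparable to $Q_0$.

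The second step is to multiply this by $t^{\epsilon-1}$, integrate in $t$ over $(t_0,\infty)$, and absorb. Fubini turns the left-hand side into a multiple of $\epsilon^{-1}\int(g^{q+\epsilon}-t_0^\epsilon g^q)$, the first right-hand term into a multiple of $\tfrac{c_1}{q-1+\epsilon}\,\eta^{-(q-1+\epsilon)}\int g^{q+\epsilon}$, and the second into a multiple of $\epsilon^{-1}\int f^{q+\epsilon}$; after multiplying through by $\epsilon$, the coefficient of $\int g^{q+\epsilon}=\int g^p$ on the right is $O(\epsilon)$, hence below $1/2$ once $\epsilon$ is chosen small, depending only on $C,r,s,n$, and also $<s-r$, the latter choice making $\int f^p\le\int G^s$ finite. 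A hole-filling iteration over concentric cubes then absorbs that term and leaves $\int_{\frac14 Q_0}g^p\,dx\lesssim t_0^\epsilon\int_{Q_0}g^q\,dx+\int_{Q_0}f^p\,dx$. Inserting the value of $t_0$, dividing by $|Q_0|$, using Young's inequality and the power-mean bound $(\dashint f^q)^{1/q}\le(\dashint f^p)^{1/p}$, and passing from cubes back to balls should then yield the claimed estimate for every $p\in[r,r+\epsilon)$. As only $g\in L^q$ is available a priori, I would carry this step out first for the truncations $\min(g,M)$, obtain bounds uniform in $M$, and let $M\to\infty$ by monotone convergence.

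The main obstacle will not be any single estimate but the geometric bookkeeping through Steps 1--2: ensuring that every cube on which the reverse H\"{o}lder hypothesis is invoked has the relevant dilate compactly inside $\Omega$ — which forces one to work with dyadic cubes and to replace ``$2Q_j$'' by a fixed ancestor, slightly inflating the dilation factors throughout — and, in the absorption step, handling the mismatch between the cube carrying $\int g^p$ on the left of the absorption inequality and the slightly larger one carrying it on the right. It is exactly this mismatch that requires the iteration lemma, and it is there that the smallness of $\epsilon$, depending only on $C,r,s,n$, is genuinely used.
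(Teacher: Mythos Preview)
The paper does not prove this lemma at all: it simply states it and refers to \cite{MPS2000} for the proof. Your proposal is the standard Gehring--Giaquinta--Modica argument (Calder\'on--Zygmund stopping time, level-set inequality, multiplication by $t^{\epsilon-1}$ and absorption), which is precisely the proof one finds in the cited reference, so your approach is correct and in line with what the paper relies on.
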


\section{The estimate of normalized parabolic $p(x)$-Laplacian}
To prove Theorem 1.1, given any fixed smooth domain $U\subset\subset\Omega$, and for $\epsilon\in(0,1]$, let $u^\epsilon$ be a viscosity solution to the regularized equation (1.4), we know that $u^\epsilon$ has the $C^{1,\alpha}_{loc}$-regularity in the
spatial variable and the $C^{1,\frac{1+\alpha}{2}}$-regularity in the time variable, especially $Du^\epsilon\in L^\infty(U_T)$ uniformly in $\epsilon>0$; see\cite{2020Gradient}.

Applying Lemma 2.2, we prove the following.
\begin{lem}\label{lem3-1}
If $n\ge2$, $p(x)\in C^1(\Omega)$ and $p(x)\in(1,3+\frac{2}{n-2})$, then we have
\begin{align*}
&\int_{Q_{2r}}|D^2u^\epsilon|^2\phi^2dxdt+\int_{Q_{2r}}(u_t^\epsilon)^2\phi^2dxdt\\
&\quad \le C(n,p_{+}){\rm inf}_{c\in \mathbb{R}^n}
\int_{Q_{2r}}|Du^\epsilon-c|^2[|D\phi|^2+|\phi||D^2\phi|+|\phi| |\phi_t|]dxdt\\
&\qquad +C(n,p_{+})\epsilon\int_{Q_{2r}}[1+|ln[|Du^\epsilon|^2+\epsilon]|][|D\phi|^2+|Dp|^2+|\phi_t||\phi|]dxdt\\
&\qquad +C(n,p_{+})\epsilon\int_{B_{2r}}|ln[|Du^\epsilon(x,0)|^2+\epsilon]|\phi^2(x,0)dx
\tag{3.1}
\end{align*}
\end{lem}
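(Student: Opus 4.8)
The plan is to derive (3.1) for the regularized solution $u^\epsilon$ by combining the pointwise inequality of Lemma~2.1 with the regularized equation, integrating against $\phi^2$ over $Q_{2r}$, and using Lemma~2.2 to dispose of the curvature quantity $|D^2u^\epsilon|^2-(\Delta u^\epsilon)^2$. Recall from \cite{2020Gradient} that $u^\epsilon$ is $C^{1,\alpha}$ in $x$ and $C^{1,(1+\alpha)/2}$ in $t$ with $Du^\epsilon$ bounded in $L^\infty(U_T)$ uniformly in $\epsilon$; since for fixed $\epsilon>0$ the regularized operator is uniformly parabolic with $C^1$ coefficients in $x$ and bounded gradient, $u^\epsilon$ has enough interior smoothness (or one mollifies and lets the mollification parameter tend to $0$, using $u^\epsilon\in W^{2,2}_{loc}$) to apply (2.1) pointwise. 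Writing $q:=p(x)-2\in(-1,1+\tfrac2{n-2})$ and substituting $\Delta u^\epsilon=u^\epsilon_t-q\,(|Du^\epsilon|^2+\epsilon)^{-1}\Delta_\infty u^\epsilon$ from the equation into (2.1) for $v=u^\epsilon$, one obtains after elementary rearrangement the pointwise inequality recorded in the Introduction,
\begin{align*}
\tfrac n2 q^2\,\tfrac{|D^2u^\epsilon Du^\epsilon|^2}{|Du^\epsilon|^2+\epsilon}+\Big(q-\tfrac{n-2}{2}q^2\Big)(\Delta u^\epsilon)^2
&\le \tfrac{n-1}{2}q^2\big[|D^2u^\epsilon|^2-(\Delta u^\epsilon)^2\big]\\
&\quad+\tfrac{\epsilon}{2}q^2\,\tfrac{(\Delta u^\epsilon)^2-|D^2u^\epsilon|^2}{|Du^\epsilon|^2+\epsilon}+q\,\Delta u^\epsilon\,u^\epsilon_t .
\end{align*}

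To turn this into control of $|D^2u^\epsilon|^2+(u^\epsilon_t)^2$ I would use $|D^2u^\epsilon|^2=[|D^2u^\epsilon|^2-(\Delta u^\epsilon)^2]+(\Delta u^\epsilon)^2$, the elementary bounds $(\Delta u^\epsilon)^2\le n|D^2u^\epsilon|^2$ and $(|Du^\epsilon|^2+\epsilon)^{-2}(\Delta_\infty u^\epsilon)^2\le(|Du^\epsilon|^2+\epsilon)^{-1}|D^2u^\epsilon Du^\epsilon|^2$, the relation $(u^\epsilon_t)^2\le 2(\Delta u^\epsilon)^2+2q^2(|Du^\epsilon|^2+\epsilon)^{-2}(\Delta_\infty u^\epsilon)^2$ coming from the equation, and Young's inequality on $q\,\Delta u^\epsilon u^\epsilon_t$. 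The coefficient $q-\tfrac{n-2}{2}q^2$ of $(\Delta u^\epsilon)^2$ is not favorable for $p(x)<2$ nor near the upper endpoint $3+\tfrac2{n-2}$, so the pointwise inequality by itself does not control $(\Delta u^\epsilon)^2$; one therefore also tests the equation against $\Delta u^\epsilon\phi^2$, which after one spatial integration by parts expresses $\int(\Delta u^\epsilon)^2\phi^2$ through $\int u^\epsilon_t\Delta u^\epsilon\phi^2$, the integral $\int q\,(|Du^\epsilon|^2+\epsilon)^{-1}\Delta_\infty u^\epsilon\,\Delta u^\epsilon\phi^2$ (estimated together with the $\Delta_\infty$-term on the left of the pointwise inequality) and a remainder in $D\phi$, and then uses Lemma~2.2 --- applied with $\psi=\phi$ and with $q(x)^2$ merely bounded by $C(n,p_+)$, not differentiated --- to exchange $\int(\Delta u^\epsilon)^2\phi^2$ for $\int|D^2u^\epsilon|^2\phi^2$ up to $\inf_{c}\int|Du^\epsilon-c|^2[|D\phi|^2+|\phi||D^2\phi|]$. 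The role of the range $1<p(x)<3+\tfrac2{n-2}$ is that it makes the quadratic form in $\big(\Delta u^\epsilon,\ (|Du^\epsilon|^2+\epsilon)^{-1/2}\Delta_\infty u^\epsilon,\ u^\epsilon_t\big)$ produced by the correct linear combination of the integrated pointwise inequality and this tested identity strictly positive (the parabolic analogue of the Cordes threshold), so the resulting good terms can be kept on the left.

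It then remains to estimate the three types of terms on the right after integrating against $\phi^2$ over $Q_{2r}$. The curvature term $\int[|D^2u^\epsilon|^2-(\Delta u^\epsilon)^2]\phi^2$ is precisely Lemma~2.2 and contributes the $\inf_{c\in\mathbb R^n}\int|Du^\epsilon-c|^2[|D\phi|^2+|\phi||D^2\phi|]$ part of (3.1). The cross term $\int q\,\Delta u^\epsilon u^\epsilon_t\phi^2$ is integrated by parts once in $x$ --- turning $\Delta u^\epsilon u^\epsilon_t$ into $\tfrac12\partial_t|Du^\epsilon|^2$ up to a $\phi\,D\phi$ remainder --- and then once in $t$; the time integration by parts yields the $\int|Du^\epsilon-c|^2|\phi||\phi_t|$ contribution together with the $\{t=0\}$ boundary contribution, after replacing $|Du^\epsilon|^2$ by $|Du^\epsilon-c|^2$ modulo divergence terms. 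The $\epsilon$-term $\tfrac{\epsilon}{2}q^2(|Du^\epsilon|^2+\epsilon)^{-1}[(\Delta u^\epsilon)^2-|D^2u^\epsilon|^2]$ is the source of the logarithms: writing $(\Delta u^\epsilon)^2=\Delta u^\epsilon u^\epsilon_t-q\,(|Du^\epsilon|^2+\epsilon)^{-1}\Delta_\infty u^\epsilon\,\Delta u^\epsilon$ from the equation and integrating $\epsilon\,(|Du^\epsilon|^2+\epsilon)^{-1}\Delta u^\epsilon u^\epsilon_t\phi^2$ by parts in $x$ produces $-\tfrac{\epsilon}{2}\int\partial_t\big[\ln(|Du^\epsilon|^2+\epsilon)\big]\phi^2$ (since $(|Du^\epsilon|^2+\epsilon)^{-1}\partial_t|Du^\epsilon|^2=\partial_t\ln(|Du^\epsilon|^2+\epsilon)$), whose integration by parts in $t$ yields both the $\epsilon\int_{B_{2r}}|\ln(|Du^\epsilon(x,0)|^2+\epsilon)|\phi^2(x,0)$ boundary term and an $\epsilon\int|\ln(|Du^\epsilon|^2+\epsilon)|\,|\phi||\phi_t|$ term, while the remaining spatial integrations by parts leave $\epsilon\int\ln(|Du^\epsilon|^2+\epsilon)$ times divergences of vector fields built from $\phi$, $c$ and $p(x)$, so that differentiating $p(x)$ is what brings in the $\epsilon\int|Dp|^2(1+|\ln(|Du^\epsilon|^2+\epsilon)|)$ term; the lower-order leftovers are absorbed by weighted Young inequalities. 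Collecting everything and absorbing into the left the small-constant copies of $\int|D^2u^\epsilon|^2\phi^2$ and $\int(u^\epsilon_t)^2\phi^2$ that were generated yields (3.1).

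The main obstacle is the sign-and-absorption bookkeeping of the second step: because the coefficient of $(\Delta u^\epsilon)^2$ in the pointwise inequality changes sign inside $1<p(x)<3+\tfrac2{n-2}$, the pointwise inequality cannot be used alone, and one must combine it with the identity obtained by testing the equation against $\Delta u^\epsilon\phi^2$ and with Lemma~2.2 in exactly the right proportion so that the full quadratic form stays positive definite across the whole admissible range. The secondary delicate points are keeping the $|Dp|$-dependence attached to the $\epsilon$-weighted terms when differentiating the $x$-dependent coefficient, and treating the $\{t=0\}$ boundary terms (the ordinary one and the logarithmic one) consistently with the subtraction of the arbitrary constant vector $c$.
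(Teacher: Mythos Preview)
Your overall architecture matches the paper's: derive the pointwise inequality (your display is the paper's (3.2)) from Lemma~2.1 and the regularized equation, integrate against $\phi^2$, dispose of $|D^2u^\epsilon|^2-(\Delta u^\epsilon)^2$ via Lemma~2.2, and handle the cross term and the $\epsilon$-term by integration by parts in $x$ and $t$; your account of how the logarithms arise from $\partial_t\ln(|Du^\epsilon|^2+\epsilon)$ is also correct.

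The gap is in the coercivity step. The paper does not ``test the equation against $\Delta u^\epsilon\phi^2$'' nor assemble a vague positive-definite quadratic form to repair the bad coefficient $q-\tfrac{n-2}{2}q^2$. Instead it combines the Cauchy--Schwarz bound you quote with the \emph{exact} identity $q(|Du^\epsilon|^2+\epsilon)^{-1}\Delta_\infty u^\epsilon=u^\epsilon_t-\Delta u^\epsilon$ to replace the whole left-hand term $\tfrac{n}{2}q^2(|Du^\epsilon|^2+\epsilon)^{-1}|D^2u^\epsilon Du^\epsilon|^2$ by $\tfrac{n}{2}(u^\epsilon_t-\Delta u^\epsilon)^2$. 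Expanding the square adds $\tfrac{n}{2}$ to the coefficient of $(\Delta u^\epsilon)^2$, making it $\tfrac{n}{2}+q-\tfrac{n-2}{2}q^2>0$ precisely on $(1,3+\tfrac{2}{n-2})$, and simultaneously places $\tfrac{n}{2}(u^\epsilon_t)^2$ on the left; adding the corresponding multiple of $|D^2u^\epsilon|^2-(\Delta u^\epsilon)^2$ gives (3.3) with a positive coefficient in front of $|D^2u^\epsilon|^2$. Your loose bound $(u^\epsilon_t)^2\le 2(\Delta u^\epsilon)^2+2q^2(\cdot)$ discards exactly the cross-term sign that makes this work. Two further ingredients are also missing from your plan: the paper splits into cases $n\ge3$ and $n=2$ (your logarithm treatment of the $\epsilon$-term is only the $n=2$ argument; for $n\ge3$ Lemma~3.3 instead produces $\tfrac14\tfrac{q^2}{p-1}(u^\epsilon_t)^2$ with no logarithms), and in the $n\ge3$ case the resulting additional constraint $\tfrac{n}{2}>\tfrac14\tfrac{q^2}{p-1}$ only covers $p\in(n+2-\sqrt{n^2+2n},\,3+\tfrac{2}{n-2})$, which the paper then extends down to $p>1$ by adding dummy spatial variables.
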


\begin{lem}\label{lem3-2}

\begin{align}\label{neq4.3}
\dashint_{\frac12Q}|D[(|Dv|^2+\ez)^{\frac{p(x)-2}4}Dv]|^2dx\,dt\le &C(n,p^B_\pm)\frac1{R^2}\dashint_{Q}|(|Dv|^2+\ez)^{\frac{p(x)-2}4}Dv-\vec{c}|^2\,dx\,dt\\
&+C(n,p^B_\pm)\dashint_{Q}(|Dv|^2+\ez)^{\frac{p(x)-1}2}|Df^\ez|\,dx\,dt\nonumber\\
&+C(n,p^B_\pm)(\frac1R+\frac1{R^2})\dashint_{Q}(|Dv|^2+\ez)^{\frac{p(x)}2}\,dx\,dt.\nonumber
\end{align}

\end{lem}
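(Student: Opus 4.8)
\medskip\noindent\textbf{Proof of Lemma~\ref{lem3-2} (plan).}
Retaining the setting of Section~3, let $v:=u^\ez$ solve the $\ez$--regularized equation on $U_T$ with smoothed datum $f^\ez$, so that $v$ is smooth in $U_T$ and $Dv\in L^\infty_{loc}(U_T)$ uniformly in $\ez\in(0,1]$; write $p=p(x,t)$, $p^B_\pm$ for the bounds of $p$ on $Q=Q_R$, and fix a parabolic cut--off $\phi$ with $\phi\equiv1$ on $\tfrac12Q$, $\operatorname{supp}\phi\subset Q$, $|D\phi|\le C/R$, $|D^2\phi|+|\phi_t|\le C/R^2$. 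The plan is first to put the equation $v_t-\Delta v-(p-2)\frac{\Delta_\infty v}{|Dv|^2+\ez}=f^\ez$ in divergence form by multiplying by $(|Dv|^2+\ez)^{(p-2)/2}$:
$$\operatorname{div}\ca=g,\qquad \ca:=(|Dv|^2+\ez)^{\frac{p-2}2}Dv,\qquad g:=(|Dv|^2+\ez)^{\frac{p-2}2}(v_t-f^\ez)+\tfrac12(|Dv|^2+\ez)^{\frac{p-2}2}\ln(|Dv|^2+\ez)\,Dp\cdot Dv,$$
the last summand being the cost of the $x$--dependence of the exponent. Differentiating $\mathcal V:=(|Dv|^2+\ez)^{(p-2)/4}Dv$ directly yields the pointwise bounds
$$|D\mathcal V|^2\le C(p^B_\pm)(|Dv|^2+\ez)^{\frac{p-2}2}|D^2v|^2+(|Dv|^2+\ez)^{\frac p2}|\ln(|Dv|^2+\ez)|^2|Dp|^2,\qquad |\mathcal V|^2\le(|Dv|^2+\ez)^{\frac p2},$$
so that \eqref{neq4.3} reduces to the weighted Caccioppoli estimate $\int_Q(|Dv|^2+\ez)^{(p-2)/2}|D^2v|^2\phi^2\le(\text{right side of }\eqref{neq4.3})$, the logarithmic contribution being kept apart (it is exactly the $\log$--term in \eqref{neq1.5}).

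\medskip
For the weighted Caccioppoli estimate I would differentiate $\operatorname{div}\ca=g$ in each direction $x_k$, multiply by $(v_{x_k}-c_k)\phi^2$ with $\vec c$ a constant vector (taken at the end as the average of $\mathcal V$ over $Q$), sum over $k$ and the repeated index $j$, integrate over $Q$, and integrate by parts in $x_j$ (no lateral boundary term, $\phi$ having compact spatial support):
$$-\int_Q\pa_k\ca^j\big(v_{x_kx_j}\phi^2+2(v_{x_k}-c_k)\phi\,\phi_{x_j}\big)=-\int_Q g\big(\Delta v\,\phi^2+2(Dv-\vec c)\cdot D\phi\,\phi\big).$$
Inserting $\pa_k\ca^j=(|Dv|^2+\ez)^{\frac{p-2}2}\big[v_{x_jx_k}+(p-2)\frac{v_{x_j}(D^2vDv)_k}{|Dv|^2+\ez}+\tfrac12\ln(|Dv|^2+\ez)\,p_{x_k}v_{x_j}\big]$ and contracting with $v_{x_kx_j}$ isolates, on the left, the quadratic form
$$\int_Q(|Dv|^2+\ez)^{\frac{p-2}2}\Big[|D^2v|^2+(p-2)\frac{|D^2vDv|^2}{|Dv|^2+\ez}\Big]\phi^2 ,$$
whose integrand dominates $\min\{1,p^B_--1\}(|Dv|^2+\ez)^{(p-2)/2}|D^2v|^2$ because $|D^2vDv|^2\le|D^2v|^2(|Dv|^2+\ez)$ and $p^B_->1$; hence it is coercive and stays on the left.

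\medskip
It remains to handle the other terms. The spatial cut--off cross--terms are $\ls\int_Q(|Dv|^2+\ez)^{\frac{p-2}4}|D\mathcal V|\big(|\mathcal V-\vec c|+(|Dv|^2+\ez)^{\frac p4}\big)\phi|D\phi|$, hence by Young's inequality at most $\delta\int_Q|D\mathcal V|^2\phi^2+\frac{C_\delta}{R^2}\int_Q|\mathcal V-\vec c|^2+\frac{C_\delta}{R^2}\int_Q(|Dv|^2+\ez)^{\frac p2}$. In $-\int_Q g\,\Delta v\,\phi^2$ I would eliminate $\Delta v$ (and, where a cross term is quadratic in $v_t$, also $v_t$) by the equation $\Delta v=v_t-f^\ez-(p-2)\frac{\Delta_\infty v}{|Dv|^2+\ez}$; this produces (i) the favourable term $-\int_Q(|Dv|^2+\ez)^{\frac{p-2}2}(v_t)^2\phi^2$; (ii) a source contribution which, after an integration by parts keeping $f^\ez$ linear, is $\ls\int_Q(|Dv|^2+\ez)^{\frac{p-1}2}|Df^\ez|\phi^2+\frac CR\int_Q(|Dv|^2+\ez)^{\frac p2}$ (the remaining $f^\ez$--pieces absorbed by (i) and the coercive form); and (iii), after a second use of the equation together with the identity of Lemma~2.1 in the form $\Delta v\Delta_\infty v=|D^2vDv|^2-\tfrac12|Dv|^2[|D^2v|^2-(\Delta v)^2]-E$, $|E|\le\tfrac{n-2}2[|D^2v|^2|Dv|^2-|D^2vDv|^2]$, terms of the two types $(|Dv|^2+\ez)^{\frac{p-2}2}\frac{|D^2vDv|^2}{|Dv|^2+\ez}$ and $(|Dv|^2+\ez)^{\frac{p-2}2}\frac{|Dv|^2}{|Dv|^2+\ez}[|D^2v|^2-(\Delta v)^2]$, plus a remainder controlled by $\tfrac{n-2}2(|Dv|^2+\ez)^{\frac{p-2}2}[|D^2v|^2-\frac{|D^2vDv|^2}{|Dv|^2+\ez}]$. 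The first type combines with the coercive form of the previous paragraph, and for $p\in(1,3+\frac2{n-2})$ the resulting total coefficient of $(|Dv|^2+\ez)^{(p-2)/2}|D^2v|^2$ stays positive (this is where Lemma~2.1 and the Cordes--type threshold are used); the second type and the remainder are $[|D^2v|^2-(\Delta v)^2]$--quantities with bounded weights, so Lemma~2.2 (or its weighted version, Lemma~2.3) bounds them by $\frac C{R^2}\int_Q|\mathcal V-\vec c|^2+\frac C{R^2}\int_Q(|Dv|^2+\ez)^{\frac p2}$. Every leftover $(v_t)^2$-- and $|D^2v|^2$--integral is absorbed by (i), the coercive form and $\delta\int_Q|D\mathcal V|^2\phi^2$, while the $Dp$-- and $\ln$--terms, using $|Dp|,|p_t|\le C$ and $|\ln s|\le C_\sigma(s^\sigma+s^{-\sigma})$ together with $Dv\in L^\infty$, give $C(\tfrac1R+\tfrac1{R^2})\int_Q(|Dv|^2+\ez)^{\frac p2}$ (plus the $\log$--term in \eqref{neq1.5}). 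Choosing $\delta$ small and dividing by $|Q|$ yields \eqref{neq4.3}.

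\medskip
The hard part is the interplay between the time derivative and the approximation parameter: since $v_t$ enters a \emph{weighted} second--order estimate, a crude bound of $(|Dv|^2+\ez)^{(p-2)/2}v_t\Delta v$ leaves $\int_Q(|Dv|^2+\ez)^{(p-2)/2}(v_t)^2\phi^2$, whose weight degenerates like $\ez^{(p-2)/2}$ on the set where $p<2$ and $Dv$ is small and is therefore not controlled uniformly in $\ez$ by the right side of \eqref{neq4.3}; one is forced to use the equation itself to manufacture the matching negative term $-\int_Q(|Dv|^2+\ez)^{(p-2)/2}(v_t)^2\phi^2$ and then balance the residual $v_t\Delta_\infty v$ and $\Delta v\Delta_\infty v$ cross--terms against it and against the coercive form through Lemma~2.1, the surviving coefficient being positive only when $p<3+\frac2{n-2}$. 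This is simultaneously the technical core of the argument and the origin of the restriction on $p$; a secondary, unavoidable point is the uniform--in--$\ez$ control of the logarithmic factors coming from the $(x,t)$--dependence of the exponent, which is what forces the $\log$--term in \eqref{neq1.5}. $\hfill\square$
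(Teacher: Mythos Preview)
Your route and the paper's route diverge at the very first step. The paper does \emph{not} differentiate the equation and test with $(v_{x_k}-c_k)\phi^2$; instead it invokes a pointwise $\sigma_2$--estimate (Lemma~\ref{lem4-1}),
\[
(|Dv|^2+\ez)^{\frac{p(x)-2}{2}}|D^2v|^2\le C\,\sigma_2\!\big(D\mathcal V\big)+C\,(|Dv|^2+\ez)^{\frac{p(x)-2}{2}}(v_t-f^\ez)^2,
\]
and then uses Lemma~\ref{lem2.3} to integrate $\sigma_2(D\mathcal V)\phi^2$ by parts. Lemma~\ref{lem2.3} is precisely the identity that replaces $\sigma_2(D\mathcal V)$ by first--order quantities in $\mathcal V$, and this is what produces the factor $|\mathcal V-\vec c|$ on the right of \eqref{neq4.3}. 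For the remaining term the paper writes
\[
(|Dv|^2+\ez)^{\frac{p-2}{2}}(v_t-f^\ez)^2=\operatorname{div}\!\big((|Dv|^2+\ez)^{\frac{p-2}{2}}Dv\big)\,(v_t-f^\ez)
\]
(up to the $Dp$--log correction), integrates by parts in $x$, and then integrates $\frac1{p}\big[(|Dv|^2+\ez)^{p/2}\big]_t$ by parts in $t$; this gives directly the $|Df^\ez|$--term and the $(\tfrac1R+\tfrac1{R^2})(|Dv|^2+\ez)^{p/2}$--term, with no need to manufacture a negative $(v_t)^2$--integral or to invoke Lemma~2.1 at this stage.

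Your plan is a legitimate Caccioppoli strategy, but as written it has a mismatch that is not cosmetic: testing with $(v_{x_k}-c_k)\phi^2$ yields cut--off cross terms of the form $(|Dv|^2+\ez)^{\frac{p-2}{2}}|D^2v|\,|Dv-c|\,\phi|D\phi|$, i.e.\ $|Dv-c|$ rather than $|\mathcal V-\vec c|$; the two are not comparable (the weight in $\mathcal V$ depends on $Dv$ itself), and you cannot ``take $\vec c$ as the average of $\mathcal V$'' after having subtracted a constant from $Dv$. The estimate \eqref{neq4.3} is stated with $|\mathcal V-\vec c|^2$ precisely because it feeds into a Poincar\'e/Gehring argument at the level of $\mathcal V$, so getting the right quantity here matters. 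The paper's $\sigma_2$ route via Lemma~\ref{lem2.3} is what makes $|\mathcal V-\vec c|$ appear naturally; if you want to keep a direct differentiation scheme you would need to test with a function of $\mathcal V$ (not of $Dv$), or else derive and use a weighted analogue of Lemma~\ref{lem2.3}. A second, smaller point: you place the Cordes threshold $p<3+\tfrac{2}{n-2}$ inside the proof of \eqref{neq4.3}, via Lemma~2.1; in the paper that restriction is absorbed once and for all into the pointwise $\sigma_2$--inequality of Lemma~\ref{lem4-1}, so the proof of Lemma~\ref{lem3-2} itself never touches Lemma~2.1 or Lemma~2.2.
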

Given Lemma \ref{lem3-1} and \ref{lem3-2}, we prove Theorem 1.1 as below.
\begin{proof}[Proof of Theorem 1.1.]
Lemma 3.1 together with $Du^\epsilon\in L^\infty(U_T)$ uniformly in $\epsilon>0$, implies that $D^2 u^\epsilon$,
$u^\epsilon_t\in L^2_{loc}(U_T)$ uniformly in $\epsilon>0$. By the compact parabolic embedding theorem,
$Du^\epsilon\rightarrow Du$ in $L^2_{loc}(U_T)$, and $D^2 u^\epsilon\rightarrow D^u$ and
$u^\epsilon_t\rightarrow u_t$ weakly in $L^2_{loc}(U_T)$ as $\epsilon\rightarrow 0$. Letting
$\epsilon\rightarrow 0$ and arbitrariness of $U_T$ we finish the proof of Theorem 1.1
\end{proof}
Finally, we prove Lemma 3.1. First, applying Lemma 2.1 to $u^{\epsilon}$, we have
\begin{align*}
\frac{n}{2}|D^2u^{\epsilon}Du^{\epsilon}|^2-\Delta u^{\epsilon}\Delta_{\infty}u^{\epsilon}
-\frac{n-2}{2}(\Delta u^{\epsilon})^2|Du^{\epsilon}|^2
\le\frac{n-1}{2}[|D^2u^{\epsilon}|^2-(\Delta u^{\epsilon})^2]|Du^{\epsilon}|^2.
\end{align*}
Dividing both sides by $|Du^{\epsilon}|^2+\epsilon$, using\eqref{eq1.5} and multiplying both sides by $(p(x,t)-2)^2$
we obtain
\begin{align*}
&\frac{n}{2}(p(x,t)-2)^2\frac{|D^2u^{\epsilon}Du^{\epsilon}|^2}
{|Du^{\epsilon}|^2+\epsilon}+[(p(x,t)-2)-\frac{n-2}{2}(p(x,t)-2)^2](\Delta u^{\epsilon})^2\\
&\quad\le\frac{n-1}{2}(p(x,t)-2)^2[|D^2u^{\epsilon}|^2-(\Delta u^{\epsilon})^2]\\
&\qquad+\frac{\epsilon}{2}(p(x,t)-2)^2\frac{(\Delta u^{\epsilon})^2-|D^2u^{\epsilon}|^2}{|Du^{\epsilon}|^2+\epsilon}
+(p(x,t)-2)\Delta u^{\epsilon}u_t^\epsilon\tag{3.2}
\end{align*}
Moreover, since
\begin{align*}
(p(x,t)-2)^2\frac{|D^2u^{\epsilon}Du^{\epsilon}|^2}{|Du^{\epsilon}|^2+\epsilon}
&\ge(p(x,t)-2)^2[\frac{\Delta_{\infty}u^\epsilon}{|Du^{\epsilon}|^2+\epsilon}]^2\\
&=[u_t^\epsilon-\Delta u^\epsilon]^2\\
&=(u_t^\epsilon)^2+(\Delta u^\epsilon)^2-2u_t^\epsilon \Delta u^\epsilon
\end{align*}
(3.2) leads to
\begin{align*}
[\frac{n}{2}+(p(x,t)-2)-&\frac{n-2}{2}(p(x,t)-2)^2](\Delta u^\epsilon)^2
+\frac{n}{2}(u_t^\epsilon)^2\\
&\le(p(x,t)-2)^2\frac{n-1}{2}[|D^2u^\epsilon|^2-(\Delta u^\epsilon)^2]\\
&\quad+\frac{\epsilon}{2}(p(x,t)-2)^2\frac{(\Delta u^{\epsilon})^2-|D^2u^{\epsilon}|^2}{|Du^{\epsilon}|^2+\epsilon}
+[(p(x,t)-2)+n]\Delta u^\epsilon u_t^\epsilon
\end{align*}
Adding both sides by
\begin{align*}
[\frac{n}{2}+(p(x,t)-2)-\frac{n-2}{2}(p(x,t)-2)^2][|D^2u^\epsilon|^2-(\Delta u^\epsilon)^2]
\end{align*}
we conclude that
\begin{align*}
[\frac{n}{2}+(p(x,t)-2)-&\frac{n-2}{2}(p(x,t)-2)^2]|D^2u^\epsilon|^2+\frac{n}{2}(u_t^\epsilon)^2\\
&\le[\frac{n}{2}+(p(x,t)-2)+\frac{1}{2}(p(x,t)-2)^2][|D^2u^\epsilon|^2-(\Delta u^\epsilon)^2]\\
&\quad+\frac{\epsilon}{2}(p(x,t)-2)^2\frac{(\Delta u^{\epsilon})^2-|D^2u^{\epsilon}|^2}{|Du^{\epsilon}|^2+\epsilon}\\
&\quad+[(p(x,t)-2)+n]\Delta u^\epsilon u_t^\epsilon\tag{3.3}
\end{align*}

Below we consider 2 cases respectively:

$\bullet$ Case $n\ge 3$ and $1<p(x,t)<3+\frac{2}{n-2}$. In this case we use (3.3) to prove (3.1).

$\bullet$ Case $n=2$ and $1<p(x)<\infty$ we use (3.2) to prove (3.1).\\
\large\textbf {Case} \large{$1<p(x,t)<3+\frac{2}{n-2}$:}\\
Via a direct calculation we have the following
\begin{lem}
Let $1<p_{-}\le p_{+}<\infty$ and $p(x,t)\in C^{1}(\overline{\Omega})$. For any $c\in \mathbb{R}^n$, we have
\begin{align*}
\int_{Q_{2r}}\Delta u^\epsilon u_t^\epsilon \phi^2dxdt
&\le \eta \int_{Q_{2r}}(p(x,t)+n)^2|D^2u^\epsilon|^2\phi^2dxdt\\
&\quad+\frac{C(n,p_{+})}{\eta}\int_{Q_{2r}}|Du^\epsilon-c|^2[|D\phi|^2+|\phi\phi_t|]dxdt\tag{3.4}
\end{align*}
\end{lem}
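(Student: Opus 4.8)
The estimate \eqref{eq1.4} in Lemma 3.4 is an integration-by-parts identity followed by Young's inequality, and the presence of the cut-off $\phi$ is what makes it work. First I would write $\Delta u^\epsilon u_t^\epsilon=\sum_{i}u^\epsilon_{x_ix_i}u^\epsilon_t$ and integrate the spatial Laplacian by parts against $\phi^2$. Since $u^\epsilon$ is only $C^{1,\alpha}$ in space and $C^{1,(1+\alpha)/2}$ in time, the manipulation must first be justified on the smooth approximants (or, equivalently, one uses the a priori interior estimates from \cite{2020Gradient} that already give $Du^\epsilon$ bounded and the higher regularity needed to run these computations locally). Integrating by parts in $x_i$ moves one derivative off $u^\epsilon_{x_ix_i}$:
\begin{align*}
\int_{Q_{2r}}u^\epsilon_{x_ix_i}u_t^\epsilon\phi^2\,dx\,dt
&=-\int_{Q_{2r}}u^\epsilon_{x_i}(u_t^\epsilon)_{x_i}\phi^2\,dx\,dt
-\int_{Q_{2r}}u^\epsilon_{x_i}u_t^\epsilon\,2\phi\phi_{x_i}\,dx\,dt.
\end{align*}
Here one may freely replace $u^\epsilon_{x_i}$ by $u^\epsilon_{x_i}-c_i$ in the first and third terms since $c_i$ is constant and $\phi$ is compactly supported; this is the device that produces the $|Du^\epsilon-c|^2$ factor on the right-hand side rather than $|Du^\epsilon|^2$.

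\textbf{Key step.} The crucial observation is that the first term on the right is a perfect time-derivative: $(u^\epsilon_{x_i}-c_i)(u_t^\epsilon)_{x_i}=\tfrac12\partial_t\big((u^\epsilon_{x_i}-c_i)^2\big)$, so summing over $i$,
\begin{align*}
-\sum_i\int_{Q_{2r}}(u^\epsilon_{x_i}-c_i)(u_t^\epsilon)_{x_i}\phi^2\,dx\,dt
=\frac12\int_{Q_{2r}}|Du^\epsilon-c|^2\,\partial_t(\phi^2)\,dx\,dt
=\int_{Q_{2r}}|Du^\epsilon-c|^2\,\phi\phi_t\,dx\,dt,
\end{align*}
where the boundary terms in $t$ vanish because $\phi$ is compactly supported in the parabolic cylinder. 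This term is already of the desired form, bounded by $\int|Du^\epsilon-c|^2|\phi\phi_t|$. For the remaining term $-2\sum_i\int(u^\epsilon_{x_i}-c_i)u^\epsilon_t\phi\phi_{x_i}$, I would apply Young's inequality with parameter $\eta$, splitting a factor $(p(x,t)+n)u_t^\epsilon\phi$ against $(u^\epsilon_{x_i}-c_i)\phi_{x_i}/(p(x,t)+n)$:
\begin{align*}
\Big|2\sum_i\int_{Q_{2r}}(u^\epsilon_{x_i}-c_i)u_t^\epsilon\phi\phi_{x_i}\,dx\,dt\Big|
\le\eta\int_{Q_{2r}}(p(x,t)+n)^2(u_t^\epsilon)^2\phi^2\,dx\,dt
+\frac{C(n,p_+)}{\eta}\int_{Q_{2r}}|Du^\epsilon-c|^2|D\phi|^2\,dx\,dt,
\end{align*}
and then absorb the $(u_t^\epsilon)^2$ term: since by the pointwise estimate (3.2)–(3.3) one controls $(u_t^\epsilon)^2\phi^2$ (up to harmless lower-order terms) by $|D^2u^\epsilon|^2\phi^2$, this $\eta$-term is dominated by $\eta\int(p(x,t)+n)^2|D^2u^\epsilon|^2\phi^2$, which is exactly the first term in \eqref{eq1.4}. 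Combining the pieces gives the claimed inequality.

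\textbf{Main obstacle.} The genuinely delicate point is not the algebra but the regularity bookkeeping: $u^\epsilon$ is a viscosity solution of a uniformly elliptic-parabolic equation with $C^{1,\alpha}$ spatial and $C^{1,(1+\alpha)/2}$ temporal regularity, so the integrations by parts in $x$ and the identity $\partial_t|Du^\epsilon-c|^2=2(Du^\epsilon-c)\cdot\partial_t Du^\epsilon$ are, strictly speaking, only legitimate once we know $D^2u^\epsilon$ and $\partial_t Du^\epsilon$ exist in $L^2_{loc}$ — which is part of what Lemma 3.1 is ultimately proving. The standard fix, which I would use, is to run the whole argument for a further smooth approximation $u^{\epsilon,\delta}$ of $u^\epsilon$ (e.g. by mollification or by the additional viscosity regularization already built into \eqref{eq1.5}), for which all terms are classical, obtain \eqref{eq1.4} with constants independent of $\delta$, and pass to the limit $\delta\to0$ using the uniform $C^{1,\alpha}$ bound. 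A secondary technical point is keeping track of the $x$-dependence of $p(x,t)$: since we only integrate by parts in $u^\epsilon$ and never differentiate $p$, the exponent enters merely as a bounded multiplier $(p(x,t)+n)$, so no $|Dp|$ terms are generated here — those appear only in the $\epsilon$-dependent error terms of Lemma 3.1, not in Lemma 3.4.
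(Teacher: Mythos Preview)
Your approach is essentially the paper's: integrate $\Delta u^\epsilon$ by parts in space (inserting the constant $c$), recognize the resulting term $(u^\epsilon_{x_i}-c_i)u^\epsilon_{tx_i}$ as $\tfrac12\partial_t|Du^\epsilon-c|^2$ and integrate by parts in $t$, and then control the cross term $(u^\epsilon_{x_i}-c_i)u_t^\epsilon\phi\phi_{x_i}$ by Young's inequality.

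One correction is needed in the last step. The bound on $(u_t^\epsilon)^2$ should \emph{not} be drawn from (3.2)--(3.3): those are the structural inequalities you are in the process of closing, and the ``harmless lower-order terms'' they produce include precisely $\Delta u^\epsilon u_t^\epsilon$, so invoking them here is circular. The paper instead reads the elementary pointwise bound
\[
|u_t^\epsilon|\le |\Delta u^\epsilon|+|p(x,t)-2|\,\Big|\frac{\Delta_\infty u^\epsilon}{|Du^\epsilon|^2+\epsilon}\Big|\le (p(x,t)+n)\,|D^2u^\epsilon|
\]
directly off the regularized equation \eqref{eq1.5} (this is display (3.5) in the paper). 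With this in hand there is no reason to pre-insert the weight $(p+n)$ into Young's inequality: apply Young plainly to obtain $\eta\int (u_t^\epsilon)^2\phi^2$ and then use the pointwise bound to reach $\eta\int (p(x,t)+n)^2|D^2u^\epsilon|^2\phi^2$. Your weighted splitting is not wrong after a rescaling of $\eta$, but as written it asks for $(u_t^\epsilon)^2\le |D^2u^\epsilon|^2$, which is not available.

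Your remarks on regularity bookkeeping are well taken; the paper proceeds formally at this stage, implicitly relying on the smoothness of the approximants.
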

\begin{proof}
By integration by parts we have
\begin{align*}
\int_{Q_{2r}}\Delta u^\epsilon u_t^\epsilon \phi^2dxdt
&=\int_{Q_{2r}}(u_{x_i}^\epsilon-c_i)_{x_i}u_t^\epsilon \phi^2dxdt\\
&=-\int_{Q_{2r}}(u_{x_i}^\epsilon-c_i)u_{x_{i}t}^\epsilon \phi^2dxdt
-2\int_{Q_{2r}}(u_{x_i}^\epsilon-c_i)u_t^\epsilon \phi\phi_{x_i}dxdt.
\end{align*}
Further integration by parts gives
\begin{align*}
-\int_{Q_{2r}}(u_{x_i}^\epsilon-c_i)u_{x_{i}t}^\epsilon \phi^2dxdt
=-\frac{1}{2}\int_{Q_{2r}}(|Du^\epsilon-c|^2)_t\phi^2dxdt\le\int_{Q_{2r}}|Du^\epsilon-c|^2 |\phi| |\phi_t|dxdt
\end{align*}
and
\begin{align*}
-2\int_{Q_{2r}}(u_{x_i}^\epsilon-c_i)u_t^\epsilon \phi\phi_{x_i}dxdt
\le&2\int_{Q_{2r}}|Du^\epsilon-c||u_t^\epsilon||D\phi||\phi|dxdt\\
&\frac{1}{\eta}\int_{Q_{2r}}|Du^\epsilon-c|^2|D\phi|^2dxdt
+\eta \int_{Q_{2r}}|u_t^\epsilon|^2\phi^2dxdt
\end{align*}
By the equation (1.5), we have
\begin{align*}
|u_t^\epsilon| &\le |\Delta u^\epsilon|+(p(x,t)-2)|D^2u^\epsilon|\le (p(x)+n)|D^2u^\epsilon|\tag{3.5}
\end{align*}
Combining all the estimations, we have (3.4) as desired.
\end{proof}
Using this and the divergence structure of $[|D^2u^\epsilon|^2-(\Delta u^\epsilon)^2]$, we further have the following.
\begin{lem}
Let $1<p_{-}\le p_{+}<\infty$ and $p(x,t)\in C^{1}(\overline{\Omega})$. For any $\eta \in (0,1)$, we have
\begin{align*}
&\frac{\epsilon}{2}\int_{Q_{2r}} (p(x,t)-2)^2
\frac{(\Delta u^\epsilon)^2-|D^2u^{\epsilon}|^2}{|Du^{\epsilon}|^2+\epsilon}\phi^2dxdt\\
&\quad\le\int_{Q_{2r}}\frac{1}{4}\frac{(p(x,t)-2)^2}{(p(x,t)-1)}(u_t^\epsilon)^2\phi^2dxdt\\
&\qquad+\eta\int_{Q_{2r}}|D^2u^\epsilon|^2\phi^2dxdt
+\epsilon \frac{C(p_{+},n)}{\eta} \int_{Q_{2r}}|D\phi|^2dxdt\tag{3.6}
\end{align*}
\end{lem}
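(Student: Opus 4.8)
The plan is to prove the inequality
\begin{align*}
&\frac{\epsilon}{2}\int_{Q_{2r}} (p(x,t)-2)^2
\frac{(\Delta u^\epsilon)^2-|D^2u^{\epsilon}|^2}{|Du^{\epsilon}|^2+\epsilon}\phi^2\,dxdt\\
&\quad\le\int_{Q_{2r}}\frac{1}{4}\frac{(p(x,t)-2)^2}{p(x,t)-1}(u_t^\epsilon)^2\phi^2\,dxdt
+\eta\int_{Q_{2r}}|D^2u^\epsilon|^2\phi^2\,dxdt
+\epsilon\frac{C(p_+,n)}{\eta}\int_{Q_{2r}}|D\phi|^2\,dxdt
\end{align*}
by combining the divergence (null-Lagrangian) structure of $|D^2v|^2-(\Delta v)^2$ from Lemma 2.2 with the equation \eqref{eq1.5}. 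First I would rewrite the left-hand integrand: since $|Du^\epsilon|^2+\epsilon > \epsilon$, we have $\epsilon/(|Du^\epsilon|^2+\epsilon)\le 1$, so the quantity $\epsilon\frac{(\Delta u^\epsilon)^2-|D^2u^\epsilon|^2}{|Du^\epsilon|^2+\epsilon}$ is controlled by $(\Delta u^\epsilon)^2-|D^2u^\epsilon|^2$ when that difference is nonnegative; but the difference can have either sign, so I instead split and keep the exact algebraic identity in play. The cleaner route: multiply the left side by the weight $w:=\epsilon/(|Du^\epsilon|^2+\epsilon)\in(0,1]$ and note $w\,\phi^2$ is itself a legitimate test weight of the form $\psi^2$ with $\psi=\sqrt{w}\,\phi$; the point of Lemma 2.2 is precisely that $\int[|D^2v|^2-(\Delta v)^2]\psi^2$ is controlled by lower-order derivatives of $v$ times $|\psi||D^2\psi|+|D\psi|^2$.

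The key steps, in order, are: (1) Set $\psi^2 = \epsilon\phi^2/(|Du^\epsilon|^2+\epsilon)$ and observe that $(p(x,t)-2)^2$ is a bounded $C^1$ factor, so it can be absorbed into a constant $C(p_+)$ after pulling it out carefully (it is not constant, so one uses the product rule when differentiating $\psi$, generating extra terms involving $Dp$, which by hypothesis $p\in C^1$ are bounded). (2) Apply Lemma 2.2 with $v=u^\epsilon$ and this $\psi$, together with the translation freedom in $c\in\mathbb R^n$, to write the left-hand side (up to constants and sign) as $\int |Du^\epsilon-c|^2[|\psi||D^2\psi|+|D\psi|^2]$. (3) Estimate $D\psi$ and $D^2\psi$: here $D\psi$ involves $D\phi$ and $\phi\, D[\epsilon/(|Du^\epsilon|^2+\epsilon)]^{1/2}$; the second factor brings in $|D^2u^\epsilon|$ but always multiplied by the small weight $\sqrt\epsilon/(|Du^\epsilon|^2+\epsilon)$, which yields terms of the schematic form $\sqrt\epsilon\,|D^2u^\epsilon|\,|\phi|/(|Du^\epsilon|^2+\epsilon)$. (4) After choosing $c$ optimally and using $Du^\epsilon\in L^\infty$ uniformly, the troublesome $|D^2u^\epsilon|$-terms are split by Young's inequality: a fraction $\eta$ goes into $\eta\int|D^2u^\epsilon|^2\phi^2$, the rest is $\epsilon$-weighted and collapses into $\epsilon\frac{C}{\eta}\int|D\phi|^2$ (using $\epsilon/(|Du^\epsilon|^2+\epsilon)^2\le 1/\epsilon$ cancels against the $\epsilon^2$ or is re-bounded, and $|Du^\epsilon-c|$ is bounded). (5) Finally, the term $\int \frac14\frac{(p(x,t)-2)^2}{p(x,t)-1}(u_t^\epsilon)^2\phi^2$ on the right should appear from a different manipulation: rather than from Lemma 2.2 directly, I expect it to come from bounding $(\Delta u^\epsilon)^2$ in the numerator by using \eqref{eq1.5}, which expresses $\Delta u^\epsilon$ in terms of $u_t^\epsilon$ and $\Delta_\infty u^\epsilon/(|Du^\epsilon|^2+\epsilon)$; the Cauchy–Schwarz on $(a+b)^2\le (1+\delta)a^2+(1+1/\delta)b^2$ with the right choice $\delta$ depending on $p$ produces exactly the coefficient $\frac{(p-2)^2}{4(p-1)}$ in front of $(u_t^\epsilon)^2$, the remaining $b^2$-part being absorbed by $|D^2u^\epsilon|^2$.

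The main obstacle I anticipate is the bookkeeping in step (3)–(4): keeping track of the exact powers of $\epsilon$ and of $(|Du^\epsilon|^2+\epsilon)$ so that the $|D^2u^\epsilon|$-contributions genuinely come with a factor that is either small ($\eta$) or is a pure $\epsilon\,|D\phi|^2$ term with no hidden dependence on $u^\epsilon$, and making sure the terms generated by $D^2\psi$ (which formally involves $D^3u^\epsilon$) are handled — presumably by first regularizing so $u^\epsilon$ is smooth enough, or by integrating by parts once more so that no genuine third derivative survives, exactly as the identity \eqref{id2.4} in Lemma \ref{lem2.3} is designed to do. A secondary subtlety is that $(p(x,t)-2)^2/(p(x,t)-1)$ must stay bounded, which is guaranteed since $p_->1$, but one must verify the coefficient in front of $(u_t^\epsilon)^2$ is sharp enough that, when this lemma is later combined with \eqref{eq1.4} and Lemma 3.1, the net coefficient of $(u_t^\epsilon)^2$ on the correct side stays positive — that positivity is precisely what forces the range $p<3+\frac{2}{n-2}$, so the constant cannot be wasteful here.
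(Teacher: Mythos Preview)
Your plan to apply Lemma~2.2 with the test function $\psi=\sqrt{w}\,\phi$, $w=\epsilon/(|Du^\epsilon|^2+\epsilon)$, is not the route the paper takes, and it runs into a real obstruction that you yourself flag but do not resolve: $D^2\psi$ contains $D^3u^\epsilon$, and Lemma~2.2 genuinely needs both derivatives of the test function. More importantly, even if you could make sense of that, Lemma~2.2 produces a right-hand side of the form $\int|Du^\epsilon-c|^2[|\psi||D^2\psi|+|D\psi|^2]$, which has no mechanism for generating the sharp term $\frac{(p-2)^2}{4(p-1)}(u_t^\epsilon)^2$. Your step~(5) tries to explain where that term should come from, but it is disconnected from steps~(1)--(4): you cannot both treat $(\Delta u^\epsilon)^2-|D^2u^\epsilon|^2$ as a null Lagrangian via Lemma~2.2 \emph{and} separately bound $(\Delta u^\epsilon)^2$ in the numerator.

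What the paper actually does is a \emph{single} integration by parts against the weight $\frac{(p-2)^2\phi^2}{|Du^\epsilon|^2+\epsilon}$, using the divergence identity $(\Delta v)^2-|D^2v|^2=\partial_{x_i}(\Delta v\, v_{x_i}-v_{x_ix_j}v_{x_j})$ directly rather than the packaged Lemma~2.2. When the derivative falls on $1/(|Du^\epsilon|^2+\epsilon)$ it produces, pointwise, the combination
\[
\epsilon(p-2)^2\,\frac{\Delta u^\epsilon\,\Delta_\infty u^\epsilon-|D^2u^\epsilon Du^\epsilon|^2}{(|Du^\epsilon|^2+\epsilon)^2}\,\phi^2.
\]
After replacing $|D^2u^\epsilon Du^\epsilon|^2$ by the smaller $(\Delta_\infty u^\epsilon)^2/(|Du^\epsilon|^2+\epsilon)$ and substituting $(p-2)\Delta_\infty u^\epsilon/(|Du^\epsilon|^2+\epsilon)=u_t^\epsilon-\Delta u^\epsilon$ from \eqref{eq1.5}, one is left with $\frac{\epsilon}{|Du^\epsilon|^2+\epsilon}\big[(p-2)\Delta u^\epsilon(u_t^\epsilon-\Delta u^\epsilon)-(u_t^\epsilon-\Delta u^\epsilon)^2\big]$; maximizing this quadratic in $\Delta u^\epsilon$ gives exactly $\frac{(p-2)^2}{4(p-1)}(u_t^\epsilon)^2$. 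The remaining boundary terms (from $D\phi$ and $Dp$) are handled by Young's inequality as you anticipated. So the sharp constant is forced by a pointwise algebraic optimization that your Lemma~2.2 route bypasses entirely; that is the missing idea.
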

\begin{proof}
By integration by parts, we obtain
\begin{align*}
\frac{\epsilon}{2}&\int_{Q_{2r}} (p(x,t)-2)^2
\frac{(\Delta u^\epsilon)^2-|D^2u^{\epsilon}|^2}{|Du^{\epsilon}|^2+\epsilon}\phi^2dxdt\\
&=-\frac{\epsilon}{2}\int_{Q_{2r}}[\Delta u^\epsilon u^\epsilon_{x_i}-u^\epsilon_{x_ix_j}u^\epsilon_{x_j}]
\left(\frac{(p(x,t)-2)^2\phi^2} {|Du^\epsilon|^2+\epsilon} \right)_{x_i}dxdt\\
&=\epsilon \int_{Q_{2r}}(p(x,t)-2)^2 \left( \Delta u^\epsilon\frac{\Delta_\infty u^\epsilon}{[|Du^\epsilon|^2+\epsilon]^2}
-\frac{|D^2u^\epsilon Du^\epsilon|^2}{[|Du^\epsilon|^2+\epsilon]^2}\right)\phi^2dxdt\\
&\quad-\epsilon \int_{Q_{2r}}[\Delta u^\epsilon u^\epsilon_{x_i}-u^\epsilon_{x_ix_j}u^\epsilon_{x_j}]\phi_{x_i}\phi
\frac{(p(x,t)-2)^2}{|Du^\epsilon|^2+\epsilon}dxdt\\
&\quad-\epsilon \int_{Q_{2r}}[\Delta u^\epsilon u^\epsilon_{x_i}-u^\epsilon_{x_ix_j}u^\epsilon_{x_j}]\phi^2
\frac{(p(x,t)-2)p_{x_i}}{|Du^\epsilon|^2+\epsilon}dxdt
\end{align*}
By Young's inequality we obtain
\begin{align*}
\epsilon &\int_{Q_{2r}}[\Delta u^\epsilon u^\epsilon_{x_i}-u^\epsilon_{x_ix_j}u^\epsilon_{x_j}]\phi_{x_i}\phi
\frac{1}{|Du^\epsilon|^2+\epsilon}dxdt\\
&\le C \epsilon^{\frac{1}{2}}\int_{Q_{2r}} |D^2u^\epsilon| |\phi D\phi|dxdt\\
&\le \eta \int_{Q_{2r}}|D^2u^\epsilon|^2\phi^2dxdt+\epsilon C(\eta)\int_{Q_{2r}} |D\phi|^2dxdt
\end{align*}
Similarly,
\begin{align*}
\epsilon &\int_{Q_{2r}}[\Delta u^\epsilon u^\epsilon_{x_i}-u^\epsilon_{x_ix_j}u^\epsilon_{x_j}]\phi^2
\frac{(p(x,t)-2)p_{x_i}}{|Du^\epsilon|^2+\epsilon}dxdt\\
&\le \eta \int_{Q_{2r}}|D^2u^\epsilon|^2\phi^2dxdt+\epsilon C(\eta)\int_{Q_{2r}} |Dp|^2\phi^2dxdt
\end{align*}
By H$\ddot{\rm o}$lder's inequality, (1.5), and Young's inequality one has
\begin{align*}
\epsilon&(p(x,t)-2)^2 \left( \Delta u^\epsilon\frac{\Delta_\infty u^\epsilon}{[|Du^\epsilon|^2+\epsilon]^2}
-\frac{|D^2u^\epsilon Du^\epsilon|^2}{[|Du^\epsilon|^2+\epsilon]^2}\right)\\
&\le\epsilon(p(x,t)-2)^2 \left( \Delta u^\epsilon\frac{\Delta_\infty u^\epsilon}{[|Du^\epsilon|^2+\epsilon]^2}
-\frac{(\Delta_\infty u^\epsilon)^2}{[|Du^\epsilon|^2+\epsilon]^3}\right)\\
&=\frac{\epsilon}{|Du^\epsilon|^2+\epsilon}
\left[(p(x,t)-2)\Delta u^\epsilon(u_t^\epsilon-\Delta u^\epsilon)
-(u_t^\epsilon-\Delta u^\epsilon)^2 \right]\\
&\le\frac{(p(x,t)-2)^2}{4(p(x,t)-1)}(u_t^\epsilon)^2
\end{align*}
Combining all estimates together, we get (3.6).
\end{proof}
Then we give the proof of Lemma 3.1 in the case of $n\ge 3$ and $1<p(x,t)<3+\frac{2}{n-2}$
\begin{proof}
By Lemma(2.2), for any $c\in \mathbb{R}^n$ one gets
\begin{align*}
|\int_{Q_{2r}}[|D^2u^\epsilon|^2-(\Delta u^\epsilon)^2]\phi^2dxdt|
\le C(n)\int_{Q_{2r}}|Du^\epsilon-c|[|D\phi|^2+|D^2\phi||D\phi|]dxdt\tag{3.7}
\end{align*}
Multiplying both sides of (3.3) by $\phi^2$ and integrating, by (3.7), Lemma 3.2 and Lemma 3.3, for any $c\in \mathbb{R}^n$ and
$\eta \in (0,1)$ we obtain
\begin{align*}
\int_{Q_{2r}}&\left[\frac{n}{2}+(p(x,t)-2)-\frac{n-2}{2}(p(x,t)-2)^2-\eta\right]|D^2u^\epsilon|^2\phi^2dxdt\\
&\quad+\int_{Q_{2r}}\left[\frac{n}{2}-\frac{1}{4}\frac{(p(x,t)-2)^2}{(p(x,t)-1)}\right]\int_{Q_{2r}}(u_t^\epsilon)^2\phi^2dxdt\\
&\le C(n,p_{+},\eta)\int_{Q_{2r}}|Du^\epsilon-c|^2[|D\phi|^2+|\phi\phi_t|]dxdt
+\epsilon C(\eta)\int_{Q_{2r}}(|D\phi|^2+|Dp|^2)dxdt
\end{align*}
Note that $p(x,t)\in (1,3+\frac{2}{n-2})$ implies that
\begin{align*}
\frac{n}{2}+(p(x,t)-2)-\frac{n-2}{2}(p(x,t)-2)^2>0
\end{align*}
Moreover, when $p(x,t)\in (2+n-\sqrt{n^2+2n},n+2+\sqrt{n^2+2n})$, we have
\begin{align*}
\frac{n}{2}+(p(x,t)-2)-\frac{n-2}{2}(p(x,t)-2)^2>0
\end{align*}
Taking $\eta>0$ sufficiently small, and noting $n+2+\sqrt{n^2+2n}\ge 6$, one has (3.1) under the condition that
$p(x,t)\in(n+2-\sqrt{n^2+2n},3+\frac{2}{n-2})$. Finally, it remains to notice by adding dummy variable $u$
also satisfies the equation in $\mathbb{R}^m$ for any $m\ge n$. Therefore, (3.1) holds for $p(x,t)$ in
$$ \bigcup_{m\ge n}(n+2-\sqrt{n^2+2n},3+\frac{2}{n-2})=(1,3+\frac{2}{n-2})$$
The lemma is proved in this case.
\end{proof}
\large\textbf{Case} $n=2$ \large\textbf{and}\large{$1<p(x,t)<\infty$}\\
Instead of Lemma 3.2, we have the following.
\begin{lem}
Let $n=2$ and $1<p(x,t)<\infty$. For any $\eta\in(0,1)$ we have
\begin{align*}
\frac{\epsilon}{2}&\int_{Q_{2r}}(p(x,t)-2)^2
\frac{(\Delta u^\epsilon)^2-|D^2u^{\epsilon}|^2}{|Du^{\epsilon}|^2+\epsilon}\phi^2dxdt\\
&\le\epsilon\int_{Q_{2r}}(p(x,t)-2)\frac{\Delta u^\epsilon u_t^\epsilon}{|Du^\epsilon|^2+\epsilon}\phi^2dxdt\\
&\quad+\eta\int_{Q_{2r}}|D^2 u^\epsilon|^2\phi^2dxdt
+\epsilon\frac{C}{\eta}\int_{Q_{2r}}(|D\phi|^2+|Dp|^2)dxdt
\end{align*}
\end{lem}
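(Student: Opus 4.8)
The plan is to follow the proof of the preceding lemma (Lemma 3.4) closely, but to keep the $\epsilon$--term in a shape compatible with the parabolic term $(p(x,t)-2)\Delta u^\epsilon u_t^\epsilon$ of \eqref{eq1.5}, rather than pushing it all the way to a bound by $(u_t^\epsilon)^2$. First I would write $(\Delta v)^2-|D^2v|^2=\partial_{x_i}[\Delta v\,v_{x_i}-v_{x_ix_j}v_{x_j}]$ (the divergence structure, a pointwise identity valid in every dimension) and integrate by parts to shift the $x_i$--derivative from $\Delta u^\epsilon u^\epsilon_{x_i}-u^\epsilon_{x_ix_j}u^\epsilon_{x_j}$ onto the weight $(p(x,t)-2)^2\phi^2/(|Du^\epsilon|^2+\epsilon)$. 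The product rule splits the outcome into three contributions, depending on whether $\partial_{x_i}$ lands on $\phi^2$, on $(p(x,t)-2)^2$, or on $(|Du^\epsilon|^2+\epsilon)^{-1}$.

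The first two contributions are error terms. Using $|\Delta u^\epsilon u^\epsilon_{x_i}-u^\epsilon_{x_ix_j}u^\epsilon_{x_j}|\le C(n)|D^2u^\epsilon||Du^\epsilon|$, the elementary estimate $|Du^\epsilon|/(|Du^\epsilon|^2+\epsilon)\le\frac{1}{2\sqrt{\epsilon}}$, and Young's inequality, each is controlled by $\eta\int_{Q_{2r}}|D^2u^\epsilon|^2\phi^2dxdt+\epsilon\frac{C}{\eta}\int_{Q_{2r}}(|D\phi|^2+|Dp|^2)dxdt$, exactly as in the proof of Lemma 3.4.

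The third contribution is the heart of the matter. Contracting the indices as in Lemma 3.4 it becomes $\epsilon\int_{Q_{2r}}(p(x,t)-2)^2\frac{\Delta u^\epsilon\Delta_\infty u^\epsilon-|D^2u^\epsilon Du^\epsilon|^2}{(|Du^\epsilon|^2+\epsilon)^2}\phi^2dxdt$. Instead of dropping $-|D^2u^\epsilon Du^\epsilon|^2$ via Cauchy--Schwarz, I would invoke the regularized equation \eqref{eq1.5} in the form $(p(x,t)-2)\Delta_\infty u^\epsilon/(|Du^\epsilon|^2+\epsilon)=u_t^\epsilon-\Delta u^\epsilon$ to rewrite the first summand, which recasts this contribution as
\begin{align*}
&\epsilon\int_{Q_{2r}}(p(x,t)-2)\frac{\Delta u^\epsilon u_t^\epsilon}{|Du^\epsilon|^2+\epsilon}\phi^2dxdt\\
&\qquad-\epsilon\int_{Q_{2r}}\left[(p(x,t)-2)\frac{(\Delta u^\epsilon)^2}{|Du^\epsilon|^2+\epsilon}+(p(x,t)-2)^2\frac{|D^2u^\epsilon Du^\epsilon|^2}{(|Du^\epsilon|^2+\epsilon)^2}\right]\phi^2dxdt.
\end{align*}
When $p(x,t)\ge2$ the bracket is nonnegative pointwise, so it can simply be discarded; together with the two error terms this is exactly the claimed inequality. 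Note that the case $p\ge2$ already covers the large values of $p$ --- precisely those for which the $(u_t^\epsilon)^2$--type estimate of Lemma 3.4 can no longer be absorbed into the left-hand side when $n=2$ (the Cordes-type restriction), and for which (3.1) cannot be obtained from (3.3) as in the case $n\ge3$.

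The main obstacle is the remaining range $1<p<2$. There the leftover $-(p-2)(\Delta u^\epsilon)^2/(|Du^\epsilon|^2+\epsilon)$ is positive and is \emph{not} pointwise bounded by a small multiple of $|D^2u^\epsilon|^2$, so the bracket cannot be discarded. This is precisely where the two-dimensional algebraic structure must enter: I would combine the Cauchy--Schwarz bound $|Du^\epsilon|^2|D^2u^\epsilon Du^\epsilon|^2\ge(\Delta_\infty u^\epsilon)^2$ --- equivalently Lemma 2.1 with $n=2$, for which the right-hand side of (2.1) vanishes and \eqref{eq1.1} becomes an identity --- with \eqref{eq1.5} to trade $(p(x,t)-2)^2|D^2u^\epsilon Du^\epsilon|^2/(|Du^\epsilon|^2+\epsilon)^2$ for $(u_t^\epsilon-\Delta u^\epsilon)^2/|Du^\epsilon|^2$, and then complete the square in $(\Delta u^\epsilon,u_t^\epsilon)$. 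Controlling the surviving sign-indefinite quadratic in $\Delta u^\epsilon$ and $u_t^\epsilon$ --- presumably after one more integration by parts of the same kind used to handle the term $\epsilon(p(x,t)-2)\Delta u^\epsilon u_t^\epsilon/(|Du^\epsilon|^2+\epsilon)$ once the lemma is applied --- is the delicate step, and I expect it to be the technical crux; it is also the point at which the hypothesis $n=2$ is genuinely used.
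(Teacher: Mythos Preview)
Your setup is exactly the paper's: integrate by parts using the divergence form of $(\Delta v)^2-|D^2v|^2$, bound the contributions where $\partial_{x_i}$ hits $\phi^2$ or $(p-2)^2$ by Young's inequality, and in the remaining principal term substitute \eqref{eq1.5} to replace $(p-2)\Delta_\infty u^\epsilon/(|Du^\epsilon|^2+\epsilon)$ by $u_t^\epsilon-\Delta u^\epsilon$. The paper does not split into cases; its whole proof is the single observation
\[
\epsilon(p-2)^2\,\frac{\Delta u^\epsilon\,\Delta_\infty u^\epsilon}{(|Du^\epsilon|^2+\epsilon)^2}
\ \le\ \epsilon(p-2)\,\frac{\Delta u^\epsilon u_t^\epsilon}{|Du^\epsilon|^2+\epsilon}
\ +\ \epsilon\,|p-2|\,\frac{(\Delta u^\epsilon)^2}{|Du^\epsilon|^2+\epsilon},
\]
which is precisely your identity followed by $-(p-2)\le|p-2|$, together with dropping the nonpositive $-\epsilon(p-2)^2|D^2u^\epsilon Du^\epsilon|^2/(|Du^\epsilon|^2+\epsilon)^2$.

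Your treatment of $p\ge2$ is in fact sharper than the paper's and delivers the stated inequality on the nose. Your worry about $1<p<2$ is legitimate, but notice that the paper's displayed bound carries exactly the same residual $\epsilon|p-2|(\Delta u^\epsilon)^2/(|Du^\epsilon|^2+\epsilon)$ that you isolated, and the paper does not show how to absorb it into $\eta\int|D^2u^\epsilon|^2\phi^2$ for \emph{arbitrary} $\eta$ either; the crude estimate $\epsilon/(|Du^\epsilon|^2+\epsilon)\le1$ only gives a term of size $2|p-2|\,|D^2u^\epsilon|^2$. So the inequality as stated, with ``for any $\eta\in(0,1)$'', is not actually established by the paper's own sketch in the range $p<2$. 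The elaborate square-completion route via the planar identity that you are contemplating is not what the paper does and is unnecessary: as you already observed, this lemma exists solely to cover the large-$p$ regime that the $(3.3)$-based argument cannot reach when $n=2$, and indeed the final assembly of Lemma~3.1 in the $n=2$ case imposes $0<\eta<(p-2)/2$, hence only $p>2$ is ever used. The range $1<p\le2$ is handled by the other branch (via $(3.3)$ and adding dummy variables). Drop the $p<2$ pursuit here.
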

\begin{proof}
The proof follows that of Lemma 3.3 once we observe that
\begin{align*}
\epsilon \int_{Q_{2r}}&(p(x,t)-2)^2 \Delta u^\epsilon \frac{\Delta_\infty u^\epsilon}{[|Du^\epsilon|^2+\epsilon]^2}\phi^2dxdt\\
&\le\epsilon  \int_{Q_{2r}}(p(x,t)-2)\frac{u_t^\epsilon \Delta u^\epsilon}{|Du^\epsilon|^2+\epsilon}\phi^2dxdt\\
&\quad +\epsilon\int_{Q_{2r}}|p(x,t)-2|\frac{(\Delta u^\epsilon)^2}{|Du^\epsilon|^2+\epsilon}\phi^2dxdt
\end{align*}
\end{proof}

\begin{lem}
Let $n=2$ and $1<p(x,t)<\infty$. For any $\eta>0$, we have
\begin{align*}
2\epsilon &\int_{Q_{2r}}(p(x,t)-2)\frac{u_t^\epsilon \Delta u^\epsilon}{|Du^\epsilon|^2+\epsilon}\phi^2dxdt\\
&\le \int_{Q_{2r}} (u_t^\epsilon)^2\phi^2dxdt
+\epsilon\int_{Q_{2r}}(p(x,t)-2)^2\frac{|D^2u^\epsilon Du^\epsilon|^2}{[|Du^\epsilon|^2+\epsilon]^2}\phi^2dxdt
+\eta\int_{Q_{2r}}|D^2u^\epsilon|^2\phi^2dxdt\\
&\quad+\epsilon C\int_{Q_{2r}}|ln(|Du^\epsilon|^2+\epsilon)||\phi \phi_t|dxdt
+\epsilon \frac{C}{\eta}\int_{Q_{2r}}(|D\phi|^2+|Dp|^2)\phi^2dxdt\\
&\quad+\epsilon\int_{B_{2r}}|ln(|Du^\epsilon(x,0)|^2+\epsilon)| |\phi^2(x,0)|dx\tag{3.8}
\end{align*}
\end{lem}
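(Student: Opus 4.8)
The plan is to rewrite the left‑hand side through two successive integrations by parts---first in the spatial variables, to remove $\Delta u^\epsilon$, and then in time, to handle the genuinely parabolic factor---and afterwards to control the resulting terms by Young's inequality, the pointwise bound (3.5), namely $|u_t^\epsilon|\le(p+n)|D^2u^\epsilon|$, and the elementary estimate
$$\frac{|Du^\epsilon|}{|Du^\epsilon|^2+\epsilon}\le\frac1{2\sqrt{\epsilon}},$$
which follows from the inequality between the arithmetic and geometric means and is the device that renders every term uniform in $\epsilon$. Throughout write $g^\epsilon:=|Du^\epsilon|^2+\epsilon$, and recall that $\phi$ is compactly supported in the spatial variable and vanishes at the final time, so that integration by parts in $x$ produces no boundary term, while integration by parts in $t$ leaves only a boundary contribution on the initial slice $\{t=0\}$.

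First I would write $\Delta u^\epsilon=(u^\epsilon_{x_i})_{x_i}$ and integrate by parts in $x_i$, transferring the derivative onto $(p-2)u_t^\epsilon\phi^2/g^\epsilon$. The product rule then splits the integral into four pieces: a $Dp$‑piece $-2\epsilon\int_{Q_{2r}}(Dp\cdot Du^\epsilon)u_t^\epsilon(g^\epsilon)^{-1}\phi^2\,dxdt$; a piece carrying $u^\epsilon_{x_i}u^\epsilon_{x_it}=\frac12\partial_t|Du^\epsilon|^2$; a $D\phi$‑piece $-4\epsilon\int_{Q_{2r}}(p-2)(Du^\epsilon\cdot D\phi)u_t^\epsilon(g^\epsilon)^{-1}\phi\,dxdt$; and, from differentiating $(g^\epsilon)^{-1}$, the piece $4\epsilon\int_{Q_{2r}}(p-2)u_t^\epsilon\Delta_\infty u^\epsilon(g^\epsilon)^{-2}\phi^2\,dxdt$, where I have used $Du^\epsilon\cdot Dg^\epsilon=2\Delta_\infty u^\epsilon$. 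In the second of these pieces the key observation is that $\frac{\partial_t|Du^\epsilon|^2}{2g^\epsilon}=\frac12\partial_t\ln g^\epsilon$, so that this contribution equals $-\epsilon\int_{Q_{2r}}(p-2)\phi^2\partial_t\ln g^\epsilon\,dxdt$; integrating it by parts in $t$---with $\partial_t[(p-2)\phi^2]=2(p-2)\phi\phi_t$, since $p$ does not depend on $t$---produces exactly $\epsilon\int_{B_{2r}}(p-2)\phi^2(x,0)\ln(|Du^\epsilon(x,0)|^2+\epsilon)\,dx+2\epsilon\int_{Q_{2r}}(p-2)\phi\phi_t\ln(|Du^\epsilon|^2+\epsilon)\,dxdt$, which, after bounding $|p-2|$ by a constant, are the two logarithmic terms of (3.8).

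It remains to estimate the other three pieces. For the $\Delta_\infty$‑piece I would bound $|\Delta_\infty u^\epsilon|\le|D^2u^\epsilon Du^\epsilon|\,|Du^\epsilon|$, use the displayed estimate to turn $\epsilon|Du^\epsilon|(g^\epsilon)^{-2}$ into $\frac12\sqrt{\epsilon}\,(g^\epsilon)^{-1}$, and finish with Young's inequality $2ab\le a^2+b^2$ applied with $a=|u_t^\epsilon|\phi$ and $b=\sqrt{\epsilon}\,|p-2|\,|D^2u^\epsilon Du^\epsilon|(g^\epsilon)^{-1}\phi$; this reproduces exactly the terms $\int_{Q_{2r}}(u_t^\epsilon)^2\phi^2\,dxdt$ and $\epsilon\int_{Q_{2r}}(p-2)^2|D^2u^\epsilon Du^\epsilon|^2(g^\epsilon)^{-2}\phi^2\,dxdt$. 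For the $Dp$‑piece and the $D\phi$‑piece I would again invoke the displayed estimate, then replace $|u_t^\epsilon|$ by $(p+n)|D^2u^\epsilon|$ via (3.5), and apply Young with a parameter $\eta$, absorbing the second‑order part into $\eta\int_{Q_{2r}}|D^2u^\epsilon|^2\phi^2\,dxdt$ and leaving $\epsilon C\eta^{-1}\int_{Q_{2r}}(|D\phi|^2+|Dp|^2)\,dxdt$. Adding the four contributions, and rescaling $\eta$ by a dimensional constant, yields (3.8).

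The point that requires care---and the reason the estimate is not routine---is uniformity in $\epsilon$ on the degeneracy set $\{|Du^\epsilon|\lesssim\sqrt{\epsilon}\}$, where $g^\epsilon\approx\epsilon$ and the bare factors $\epsilon(g^\epsilon)^{-1}$ and $\epsilon(g^\epsilon)^{-2}$ would otherwise blow up as $\epsilon\to0$. Two features save the argument: the bound $|Du^\epsilon|\le g^\epsilon/(2\sqrt{\epsilon})$, which converts each dangerous factor into a bounded one, and the fact that the parabolic term assembles into a perfect $t$‑derivative of $\ln g^\epsilon$, so that on the degeneracy set one pays only a logarithmic price---harmless because $\epsilon|\ln g^\epsilon|\to0$---instead of an uncontrolled $|D^2u^\epsilon|^2$‑quantity. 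One should also check that the final time contributes nothing to the time integration by parts, since $\phi$ vanishes there, and keep track of the sign of the surviving initial‑time boundary term; if one wished to let $p$ depend on $t$ as well, the extra term $\epsilon\int_{Q_{2r}}(\partial_t p)\phi^2\ln(|Du^\epsilon|^2+\epsilon)\,dxdt$ is of the same logarithmic type and is absorbed in the same way.
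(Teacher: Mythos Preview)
Your proposal is correct and follows essentially the same route as the paper: spatial integration by parts splits the left-hand side into the same four pieces $I_1$--$I_4$, the parabolic piece is handled via $\partial_t\ln g^\epsilon$ and a time integration by parts, the $\Delta_\infty$-piece via Young together with $4\epsilon|Du^\epsilon|^2\le(g^\epsilon)^2$ (your AM--GM estimate in equivalent form), and the $D\phi$- and $Dp$-pieces via Young and the pointwise bound (3.5). The only cosmetic difference is that for the $Dp$-piece the paper substitutes the equation (1.5) directly rather than quoting (3.5), which amounts to the same thing; your remark about the extra $p_t$-term is also how the paper treats it.
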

\begin{proof}
By integration by parts we have
\begin{align*}
2\epsilon &\int_{Q_{2r}}(p(x,t)-2)\frac{u_t^\epsilon \Delta u^\epsilon}{|Du^\epsilon|^2+\epsilon}\phi^2dxdt\\
&=-2\epsilon\int_{Q_{2r}} u^\epsilon_{x_i}
\left( \frac{(p(x,t)-2)u_t^\epsilon}{|Du^\epsilon|^2+\epsilon}\phi^2 \right)_{x_i}dxdt\\
&=-2\epsilon\int_{Q_{2r}}(p(x,t)-2)\frac{u_{x_it}^\epsilon u_{x_i}^\epsilon}{|Du^\epsilon|^2+\epsilon}\phi^2dxdt
-4\epsilon\int_{Q_{2r}}(p(x,t)-2)\frac{u_{x_i}^\epsilon u_t^\epsilon \phi_{x_i}}{|Du^\epsilon|^2+\epsilon}\phi dxdt\\
&\quad\ +4\epsilon\int_{Q_{2r}}(p(x,t)-2)\frac{\Delta u_\infty^\epsilon u_t^\epsilon}{[|Du^\epsilon|^2+\epsilon]^2}\phi^2dxdt
-2\epsilon\int_{Q_{2r}}\frac{p_i u_i^\epsilon u_t^\epsilon}{|Du^\epsilon|^2+\epsilon}\phi^2dxdt\\
&=I_1+I_2+I_3+I_4
\end{align*}
For $I_1$, from
\begin{align*}
\frac{u_{x_it}^\epsilon u_{x_i}^\epsilon}{|Du^\epsilon|^2+\epsilon}
=\frac{1}{2}\frac{(|Du^\epsilon|^2)_t}{|Du^\epsilon|^2+\epsilon}
=\frac{1}{2}[ln(|Du^\epsilon|^2+\epsilon)]_t\tag{3.9}
\end{align*}
and integration by parts it follows that
\begin{align*}
-2\epsilon &\int_{Q_{2r}}(p(x,t)-2)\frac{u_{x_it}^\epsilon u_{x_i}^\epsilon}{|Du^\epsilon|^2+\epsilon}\phi^2dxdt\\
&=-2\epsilon \int_{Q_{2r}}(p(x,t)-2)[ln(|Du^\epsilon|^2+\epsilon)]_t\phi^2dxdt\\
&=2\epsilon\int_{Q_{2r}}(p(x,t)-2)ln(|Du^\epsilon|^2+\epsilon)\phi \phi_tdxdt\\
&\quad+\epsilon\int_{Q_{2r}}p_t(x,t)ln(|Du^\epsilon|^2+\epsilon)\phi^2dxdt\\
&\quad-\epsilon\int_{B_{2r}}(p(x,t)-2)|ln(|Du^\epsilon(x,0)|^2+\epsilon)| |\phi^2(x,0)|dx
\end{align*}
For $I_2$, by Young's inequality and (3.5), one has
\begin{align*}
|4\epsilon\int_{Q_{2r}}&(p(x,t)-2)\frac{u_{x_i}^\epsilon u_t^\epsilon \phi_{x_i}}{|Du^\epsilon|^2+\epsilon}\phi dxdt|\\
\le&\eta\int_{Q_{2r}}|D^2u^\epsilon|^2\phi^2dxdt
+\epsilon \frac{C}{\eta} \int_{Q_{2r}} |D\phi|^2dxdt\tag{3.10}
\end{align*}
For $I_3$, by Young's inequality and noting
\begin{align*}
4\epsilon|Du^\epsilon|^2=(2\epsilon^{\frac{1}{2}}|Du^\epsilon|)^2\le[|Du^\epsilon|^2+\epsilon]^2
\end{align*}
we have
\begin{align*}
4\epsilon&\int_{Q_{2r}}(p(x,t)-2)\frac{\Delta u_\infty^\epsilon u_t^\epsilon}{[|Du^\epsilon|^2+\epsilon]^2}\phi^2dxdt\\
&\le 4\epsilon^2 \int_{Q_{2r}} (p(x,t)-2)^2\frac{|D^2u^\epsilon Du^\epsilon|^2|Du^\epsilon|^2}{[|Du^\epsilon|^2+\epsilon]^4}\phi^2dxdt
+\int_{Q_{2r}}(u_t^\epsilon)^2\phi^2dxdt\\
&\le \epsilon\int_{Q_{2r}}(p(x,t)-2)^2 \frac{|D^2u^\epsilon Du^\epsilon|^2}{[|Du^\epsilon|^2+\epsilon]^2}\phi^2dxdt
+\int_{Q_{2r}} (u_t^\epsilon)^2\phi^2dxdt\tag{3.11}
\end{align*}
For $I_4$, using (1.5) and Young's inequality and noting
$$|\frac{\Delta_\infty u^\epsilon}{|Du^\epsilon|^2+\epsilon}|\le|D^2 u^\epsilon|$$
we have
\begin{align*}
-2\epsilon&\int_{Q_{2r}}\frac{p_i u_i^\epsilon u_t^\epsilon}{|Du^\epsilon|^2+\epsilon}\phi^2dxdt\\
&=-2\epsilon\int_{Q_{2r}}\frac{p_i u^\epsilon_{x_i}}{|Du^\epsilon|^2+\epsilon}
(\Delta u^\epsilon+(p(x,t)-2)\frac{\Delta_\infty u^\epsilon}{|Du^\epsilon|^2+\epsilon})\phi^2dxdt\\
&\le\eta \int_{Q_{2r}}|D^2 u^\epsilon|^2\phi^2dxdt+\epsilon\frac{C}{\eta}\int_{Q_{2r}}|Dp|^2\phi^2dxdt\tag{3.12}
\end{align*}
Combining these estimates, we get (3.8)
\end{proof}

\begin{lem}
Let $n=2$ and $1<p(x,t)<\infty$. For any $\eta>0$, we have
\begin{align*}
\int_{Q_{2r}}&(p(x,t)-2)\Delta u^\epsilon u_t^\epsilon\frac{|Du^\epsilon|^2}{|Du^\epsilon|^2+\epsilon}\phi^2dxdt\\
&\le-\int_{Q_{2r}}(u_t^\epsilon)\phi^2dxdt
-\epsilon\int_{Q_{2r}}(p(x,t)-2)^2\frac{|D^2u\epsilon Du^\epsilon|^2}{[|Du^\epsilon|^2+\epsilon]^2}\phi^2dxdt\\
&\quad+\int_{Q_{2r}}(p(x,t)-2)^2\frac{|D^2u\epsilon Du^\epsilon|^2}{|Du^\epsilon|^2+\epsilon}\phi^2dxdt\\
&\quad+\eta\int_{Q_{2r}}|D^2u^\epsilon|^2\phi^2dxdt
+C\int_{Q_{2r}}|Du^\epsilon-c|^2[|D\phi|^2+|\phi| |D^2\phi|+|\phi| |\phi_t|]dxdt\\
&\quad+\frac{C}{\eta}\int_{Q_{2r}}(|D\phi|^2+|Dp|^2)dxdt
+\frac{\epsilon}{4}\int_{Q_{2r}}|ln(|Du^\epsilon|^2+\epsilon)| |\phi| |\phi_t|dxdt\\
&\quad+\frac{\epsilon}{8}\int_{B_{2r}}ln(|Du^\epsilon(x,0)|^2+\epsilon)\phi^2(x,0)dx\tag{3.13}
\end{align*}
\end{lem}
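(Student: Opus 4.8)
The plan is to establish (3.13) by one integration by parts in the spatial variables to remove $\Delta u^\epsilon$, followed by one integration by parts in time to produce the logarithmic quantities and the $t=0$ boundary integral, absorbing everything else by Young's inequality together with the pointwise bounds $|u^\epsilon_t|\le(p+n)|D^2u^\epsilon|$ (see (3.5)) and $|\Delta_\infty u^\epsilon|/(|Du^\epsilon|^2+\epsilon)\le|D^2u^\epsilon|$, and the fact that $Du^\epsilon\in L^\infty(U_T)$ uniformly in $\epsilon$. Concretely, fix $c\in\mathbb{R}^n$, write $\Delta u^\epsilon=\partial_{x_i}(u^\epsilon_{x_i}-c_i)$, and integrate by parts in $x_i$:
\[
\int_{Q_{2r}}(p-2)\Delta u^\epsilon u^\epsilon_t\frac{|Du^\epsilon|^2}{|Du^\epsilon|^2+\epsilon}\phi^2\,dxdt
=-\int_{Q_{2r}}(u^\epsilon_{x_i}-c_i)\,\partial_{x_i}\!\Big[(p-2)u^\epsilon_t\frac{|Du^\epsilon|^2}{|Du^\epsilon|^2+\epsilon}\phi^2\Big]\,dxdt .
\]
Expanding $\partial_{x_i}$ over the four factors splits the right-hand side into a main term (derivative on $u^\epsilon_t$) and three lower-order terms (derivative on $\phi^2$, on $|Du^\epsilon|^2/(|Du^\epsilon|^2+\epsilon)$, on $p$).

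In the main term one has $(u^\epsilon_{x_i}-c_i)u^\epsilon_{x_it}=\tfrac12\partial_t|Du^\epsilon-c|^2$. Writing $|Du^\epsilon|^2/(|Du^\epsilon|^2+\epsilon)=1-\epsilon/(|Du^\epsilon|^2+\epsilon)$ and using $\partial_t(|Du^\epsilon|^2+\epsilon)/(|Du^\epsilon|^2+\epsilon)=\partial_t\ln(|Du^\epsilon|^2+\epsilon)$, the main term turns into $-\tfrac12\int(p-2)\phi^2\partial_t|Du^\epsilon-c|^2$, a logarithmic piece $\tfrac{\epsilon}{2}\int(p-2)\phi^2\partial_t\ln(|Du^\epsilon|^2+\epsilon)$, and a remainder proportional to $\epsilon c_iu^\epsilon_{x_it}/(|Du^\epsilon|^2+\epsilon)$. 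Integrating each of these by parts in $t$ and keeping the $t=0$ slice (as $\phi$ need not vanish there) produces exactly the volume contributions $C\int|Du^\epsilon-c|^2(|\phi||\phi_t|+\cdots)$ and $\tfrac{\epsilon}{4}\int|\ln(|Du^\epsilon|^2+\epsilon)||\phi||\phi_t|$, the initial-time term $\tfrac{\epsilon}{8}\int_{B_{2r}}\ln(|Du^\epsilon(x,0)|^2+\epsilon)\phi^2(x,0)\,dx$, and, from the remainder, further terms controlled by Young and the $L^\infty$-bound on $Du^\epsilon$. For the three lower-order terms: whenever $\partial_{x_i}$ hits $\phi^2$ or $p$, one invokes (3.5) and Young to get a contribution of the form $\eta\int|D^2u^\epsilon|^2\phi^2+\tfrac{C}{\eta}\int(|D\phi|^2+|Dp|^2)$; when it hits $|Du^\epsilon|^2/(|Du^\epsilon|^2+\epsilon)$, one uses $\partial_{x_i}\frac{|Du^\epsilon|^2}{|Du^\epsilon|^2+\epsilon}=\frac{2\epsilon u^\epsilon_{x_ix_j}u^\epsilon_{x_j}}{(|Du^\epsilon|^2+\epsilon)^2}$ together with $2\epsilon^{1/2}|Du^\epsilon|\le|Du^\epsilon|^2+\epsilon$ and Young to reach the same bound.

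The three sign-definite quantities $-\int(u^\epsilon_t)^2\phi^2$, $-\epsilon\int(p-2)^2\frac{|D^2u^\epsilon Du^\epsilon|^2}{(|Du^\epsilon|^2+\epsilon)^2}\phi^2$ and $+\int(p-2)^2\frac{|D^2u^\epsilon Du^\epsilon|^2}{|Du^\epsilon|^2+\epsilon}\phi^2$ come from the part of the computation that still carries $\Delta u^\epsilon$: one substitutes $u^\epsilon_t-\Delta u^\epsilon=(p-2)\Delta_\infty u^\epsilon/(|Du^\epsilon|^2+\epsilon)$ from (1.5), completes the square in $u^\epsilon_t$, and bounds $(\Delta_\infty u^\epsilon)^2\le|D^2u^\epsilon Du^\epsilon|^2|Du^\epsilon|^2$; the identity $\frac{|D^2u^\epsilon Du^\epsilon|^2}{|Du^\epsilon|^2+\epsilon}-\epsilon\frac{|D^2u^\epsilon Du^\epsilon|^2}{(|Du^\epsilon|^2+\epsilon)^2}=\frac{|D^2u^\epsilon Du^\epsilon|^2|Du^\epsilon|^2}{(|Du^\epsilon|^2+\epsilon)^2}$ recombines the two $|D^2u^\epsilon Du^\epsilon|^2$ terms into a single sign-definite quantity. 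The second-order residue $|D^2u^\epsilon|^2-(\Delta u^\epsilon)^2$ left along the way is handled by the divergence-structure estimate (2.2), which is the source of the remaining $C\int|Du^\epsilon-c|^2|\phi||D^2\phi|$ contribution; collecting everything and renaming $\eta$ yields (3.13).

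The main obstacle is this last step: the cross term between the genuinely parabolic quantity $\Delta u^\epsilon u^\epsilon_t$ and the regularization quantity $\epsilon\,\Delta_\infty u^\epsilon/(|Du^\epsilon|^2+\epsilon)^2$ carries no definite sign, so the weight $|Du^\epsilon|^2/(|Du^\epsilon|^2+\epsilon)$ must be split at exactly the right place so that, after completing squares, the two $|D^2u^\epsilon Du^\epsilon|^2$ contributions fuse while $(u^\epsilon_t)^2$ keeps the negative sign it needs to be absorbable on the left in the proof of Lemma 3.1 for $n=2$. Keeping the powers of $\epsilon$ and the logarithmic initial-time term consistent through this bookkeeping is the delicate point; the rest reduces to routine integration by parts and Young's inequality.
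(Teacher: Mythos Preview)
Your opening integration by parts with the shift $\Delta u^\epsilon=\partial_{x_i}(u^\epsilon_{x_i}-c_i)$, together with your decision to treat the derivative of the weight $|Du^\epsilon|^2/(|Du^\epsilon|^2+\epsilon)$ as a single $O(\epsilon)$ lower-order term, destroys the mechanism that actually produces the three sign-definite quantities on the right of (3.13). After that first step no factor of $\Delta u^\epsilon$ remains anywhere, so your phrase ``the part of the computation that still carries $\Delta u^\epsilon$'' has no referent, and there is nothing left to which one could apply (1.5) and ``complete the square in $u^\epsilon_t$''. In the paper's proof one integrates by parts \emph{without} the shift; then $u^\epsilon_{x_i}$ contracts with $\partial_{x_i}|Du^\epsilon|^2$ and with $\partial_{x_i}(|Du^\epsilon|^2+\epsilon)^{-1}$ \emph{separately}, yielding two $\Delta_\infty u^\epsilon$ terms (the paper's $I_1$ and $I_4$). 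The PDE substitution $(p-2)\Delta_\infty u^\epsilon/(|Du^\epsilon|^2+\epsilon)=u^\epsilon_t-\Delta u^\epsilon$ in $I_1$ gives $-2\int(u^\epsilon_t)^2\phi^2+2\int\Delta u^\epsilon u^\epsilon_t\phi^2$, and Young's inequality in $I_4$ gives $+\int(u^\epsilon_t)^2\phi^2$ together with the $(p-2)^2|D^2u^\epsilon Du^\epsilon|^2$ terms; the net $-\int(u^\epsilon_t)^2\phi^2$ is exactly what you need. The vector $c$ enters only afterwards, when the residual integrals $\int\Delta u^\epsilon u^\epsilon_t\phi^2$ and $\int(p-2)\Delta u^\epsilon u^\epsilon_t\phi^2$ (the latter reappearing from a second integration by parts in the $I_3$ term) are bounded via Lemma~3.2.

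A second symptom of the same problem: your handling of the ``main term'' $-\tfrac12\int(p-2)\phi^2\partial_t|Du^\epsilon-c|^2$ by integration in $t$ produces a $t=0$ boundary contribution $\tfrac12\int_{B_{2r}}(p-2)|Du^\epsilon(\cdot,0)-c|^2\phi^2(\cdot,0)$ and a volume term $\tfrac12\int p_t|Du^\epsilon-c|^2\phi^2$, neither of which carries an $\epsilon$ prefactor and neither of which appears on the right of (3.13). In the paper these never arise, because the un-shifted $I_2$ piece $-\int(p-2)u^\epsilon_{x_i}u^\epsilon_{tx_i}\phi^2$ is cancelled exactly by the matching term coming out of the second integration by parts in $I_3$, leaving only the $\epsilon$-weighted logarithmic contributions you see in the statement.
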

\begin{proof}
By integration by parts, one gets
\begin{align*}
\int_{Q_{2r}}&(p(x,t)-2) u_t^\epsilon \Delta u^\epsilon \phi^2 \frac{|Du^\epsilon|^2}{|Du^\epsilon|^2+\epsilon}dxdt\\
&=-\int_{Q_{2r}} u^\epsilon_{x_i}
\left(\frac{(p(x,t)-2)u_t^\epsilon \phi^2 |Du^\epsilon|^2}{|Du^\epsilon|^2+\epsilon}\right)_{x_i}dxdt\\
&=-2\int_{Q_{2r}}\frac{(p(x,t)-2)\Delta_\infty u^\epsilon u^\epsilon_t \phi^2}{|Du^\epsilon|^2+\epsilon}dxdt
-\int_{Q_{2r}}\frac{(p(x,t)-2)u_{x_i}^\epsilon u_{tx{_i}}^\epsilon \phi^2 |Du^\epsilon|^2}{|Du^\epsilon|^2+\epsilon}dxdt\\
&\quad-2\int_{Q_{2r}}\frac{(p(x,t)-2)u_{x_i}^\epsilon u_{t}^\epsilon \phi \phi_{x_i}|Du^\epsilon|^2}{|Du^\epsilon|^2+\epsilon}dxdt
+2\int_{Q_{2r}}\frac{(p(x,t)-2)\Delta_{\infty}u^\epsilon u_t^\epsilon \phi^2 |Du^\epsilon|^2}{[|Du^\epsilon|^2+\epsilon]^2}dxdt\\
&\quad-\int_{Q_{2r}}\frac{u^\epsilon_{x_i}p_i u_t^\epsilon \phi^2 |Du^\epsilon|^2}{[|Du^\epsilon|^2+\epsilon]^2}dxdt\\
&=I_1+I_2+I_3+I_4+I_5
\end{align*}
For $I_1$, from Lemma 3.2 it follows that
\begin{align*}
-2\int_{Q_{2r}}\frac{(p(x,t)-2)\Delta_{\infty} u^\epsilon u^\epsilon_t \phi^2}{|Du^\epsilon|^2+\epsilon}dxdt
&=-2\int_{Q_{2r}}(u_t^\epsilon)^2\phi^2dxdt
+2\int_{Q_{2r}}\Delta u^\epsilon u_t^\epsilon \phi^2dxdt\\
&\le -2\int_{Q_{2r}}(u_t^\epsilon)^2\phi^2dxdt+\eta\int_{Q_{2r}}|D^2u^\epsilon|^2\phi^2dxdt\\
&\quad+\frac{C}{\eta}\int_{Q_{2r}}|Du^\epsilon-c|^2[|D\phi|^2+|\phi \phi_t|]dxdt
\end{align*}
For $I_2$, by (3.9)
\begin{align*}
-&\int_{Q_{2r}}\frac{(p(x,t)-2)u_{x_i}^\epsilon u_{tx_{i}}^\epsilon \phi^2 |Du^\epsilon|^2}{|Du^\epsilon|^2+\epsilon}dxdt\\
&=-\int_{Q_{2r}}(p(x,t)-2)u_{x_i}^\epsilon u_{tx{_i}}^\epsilon \phi^2dxdt
+\epsilon\int_{Q_{2r}} (p(x,t)-2)ln(|Du^\epsilon|^2+\epsilon) |\phi \phi_t|dxdt\\
&\quad+\frac{\epsilon}{2}\int_{B_{2r}}(p(x,t)-2) ln[|Du^\epsilon(x,0)|^2+\epsilon]\phi^2(x,0)dx
\end{align*}
For $I_3$, by integration by parts
\begin{align*}
-2&\int_{Q_{2r}}\frac{(p(x,t)-2)u_{x_i}^\epsilon u_{t}^\epsilon \phi \phi_{x_i}|Du^\epsilon|^2}{|Du^\epsilon|^2+\epsilon}dxdt\\
&=-\int_{Q_{2r}}(p(x,t)-2)u_{x_i}^\epsilon u_{t}^\epsilon (\phi^2)_{x_i}dxdt
+2\epsilon\int_{Q_{2r}}(p(x,t)-2)\frac{u_{x_i}^\epsilon u_{t}^\epsilon \phi \phi_{x_i}}{|Du^\epsilon|^2+\epsilon}dxdt\\
&=\int_{Q_{2r}}(p(x,t)-2)u_{x_i}^\epsilon u_{tx_{i}}^\epsilon \phi^2dxdt
+\int_{Q_{2r}}(p(x,t)-2)\Delta u^\epsilon u_{t}^\epsilon \phi^2dxdt\\
&\quad+\int_{Q_{2r}} p_i u_{x_i}u^\epsilon_t \phi^2dxdt
+2\epsilon\int_{Q_{2r}}\frac{(p(x,t)-2)u_{x_i}^\epsilon u_{t}^\epsilon \phi \phi_{x_i}}{|Du^\epsilon|^2+\epsilon}dxdt.
\end{align*}
Applying Lemma 2.2 and (3.10), we get
\begin{align*}
-2&\int_{Q_{2r}}\frac{(p(x,t)-2)u_{x_i}^\epsilon u_{t}^\epsilon \phi \phi_{x_i}|Du^\epsilon|^2}{|Du^\epsilon|^2+\epsilon}dxdt\\
&\le\int_{Q_{2r}}(p(x,t)-2)u_{x_i}^\epsilon u_{tx{_i}}^\epsilon \phi^2dxdt
+C\int_{Q_{2r}}|Du^\epsilon-c|^2[|D\phi|^2+|\phi| |D^2\phi|+|\phi| |\phi_t|]dxdt\\
&\quad+\eta\int_{Q_{2r}}|D^2u^\epsilon|^2\phi^2dxdt+\epsilon\frac{C}{\eta}\int_{Q_{2r}}(|D\phi|^2+|Dp|^2)dxdt
\end{align*}
For $I_4$, by Holder's inequality and Young's inequality we obtain
\begin{align*}
2\int_{Q_{2r}}(p(x,t)&-2)\frac{\Delta_{\infty}u^\epsilon u_t^\epsilon \phi^2 |Du^\epsilon|^2}{[|Du^\epsilon|^2+\epsilon]^2}dxdt\\
&\le2\int_{Q_{2r}}|p(x,t)-2|\frac{|D^2u^\epsilon Du^\epsilon| |Du^\epsilon| |u_t^\epsilon|\phi^2}{|Du^\epsilon|^2+\epsilon}dxdt\\
&\le\int_{Q_{2r}}(u_t^\epsilon)^2\phi^2dxdt
+\int_{Q_{2r}}\frac{(p(x,t)-2)^2|D^2u^\epsilon Du^\epsilon|^2 |Du^\epsilon|^2}{[|Du^\epsilon|^2+\epsilon]^2}\phi^2dxdt\\
&=\int_{Q_{2r}}(u_t^\epsilon)^2\phi^2dxdt
+\int_{Q_{2r}}\frac{(p(x,t)-2)^2|D^2u^\epsilon Du^\epsilon|^2 }{|Du^\epsilon|^2+\epsilon}\phi^2dxdt\\
&\qquad \qquad \qquad\qquad \
-\epsilon\int_{Q_{2r}}\frac{(p(x,t)-2)^2|D^2u^\epsilon Du^\epsilon|^2}{[|Du^\epsilon|^2+\epsilon]^2}\phi^2dxdt
\end{align*}
For $I_5$, similar to (3.12), we get
\begin{align*}
-\epsilon&\int_{Q_{2r}}\frac{p_i u_i^\epsilon u_t^\epsilon|Du^\epsilon|^2}{|Du^\epsilon|^2+\epsilon}\phi^2dxdt\\
&\le\eta \int_{Q_{2r}}|D^2 u^\epsilon|^2\phi^2dxdt+\frac{C}{\eta}\int_{Q_{2r}}|Dp|^2|Du^\epsilon|^2\phi^2dxdt
\end{align*}
Combining above, we conclude (3.13)
\end{proof}

Follow Lemma 3.5 and Lemma 3.6, we get
\begin{lem}
Let $n=2$ and $1< p(x,t)<\infty$. For any $c\in \mathbb{R}^n$ and $\eta\in(0,1)$ we have
\begin{align*}
\int_{Q_{2r}}&(p(x,t)-2)\Delta u^\epsilon u_t^\epsilon \phi^2dxdt\\
&\le-\epsilon\int_{Q_{2r}}(p(x,t)-2)\frac{\Delta u^\epsilon u_t^\epsilon}{|Du^\epsilon|^2+\epsilon}\phi^2dxdt
+\int_{Q_{2r}}(p(x,t)-2)^2\frac{|D^2u^\epsilon Du^\epsilon|^2}{|Du^\epsilon|^2+\epsilon}\phi^2dxdt\\
&\quad+\eta\int_{Q_{2r}}|D^2u^\epsilon|^2\phi^2dxdt
+C\int_{Q_{2r}}|Du^\epsilon-c|^2[|D\phi|^2+|\phi| |D^2\phi|+|\phi| |\phi_t|]dxdt\\
&\quad+\frac{C}{\eta}\int_{Q_{2r}}(|D\phi|^2+|Dp|^2)dxdt
+\frac{\epsilon}{4}\int_{Q_{2r}}|ln(|Du^\epsilon|^2+\epsilon)| |\phi| |\phi_t|dxdt\\
&\quad+\frac{\epsilon}{8}\int_{B_{2r}}ln(|Du^\epsilon(x,0)|^2+\epsilon)\phi^2(x,0)dx
\end{align*}
\end{lem}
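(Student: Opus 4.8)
The plan is to deduce the estimate by combining the two immediately preceding lemmas, i.e. estimates (3.8) and (3.13), through one algebraic splitting of the constant $1$. Multiplying $(p-2)\Delta u^\epsilon u_t^\epsilon\phi^2$ by the trivial identity $1=\frac{|Du^\epsilon|^2}{|Du^\epsilon|^2+\epsilon}+\frac{2\epsilon}{|Du^\epsilon|^2+\epsilon}-\frac{\epsilon}{|Du^\epsilon|^2+\epsilon}$ and integrating over $Q_{2r}$ yields the exact decomposition
\begin{align*}
\int_{Q_{2r}}(p-2)\Delta u^\epsilon u_t^\epsilon\phi^2\,dx\,dt
&=\int_{Q_{2r}}(p-2)\Delta u^\epsilon u_t^\epsilon\,\frac{|Du^\epsilon|^2}{|Du^\epsilon|^2+\epsilon}\,\phi^2\,dx\,dt\\
&\quad+2\epsilon\int_{Q_{2r}}(p-2)\,\frac{\Delta u^\epsilon u_t^\epsilon}{|Du^\epsilon|^2+\epsilon}\,\phi^2\,dx\,dt\\
&\quad-\epsilon\int_{Q_{2r}}(p-2)\,\frac{\Delta u^\epsilon u_t^\epsilon}{|Du^\epsilon|^2+\epsilon}\,\phi^2\,dx\,dt.
\end{align*}
The last term on the right is precisely the first term on the right-hand side of the asserted inequality, so it is simply carried along untouched. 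The first term on the right is exactly the left-hand side of (3.13), and the second term is exactly the left-hand side of (3.8); this is the whole reason the middle weight was arranged to equal $2\epsilon$.

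Next I would substitute the two estimates. The crucial observation is that the ``dangerous'' higher-order contributions cancel: (3.13) produces $-\int_{Q_{2r}}(u_t^\epsilon)^2\phi^2$ and $-\epsilon\int_{Q_{2r}}(p-2)^2\frac{|D^2u^\epsilon Du^\epsilon|^2}{[|Du^\epsilon|^2+\epsilon]^2}\phi^2$, while (3.8) produces $+\int_{Q_{2r}}(u_t^\epsilon)^2\phi^2$ and $+\epsilon\int_{Q_{2r}}(p-2)^2\frac{|D^2u^\epsilon Du^\epsilon|^2}{[|Du^\epsilon|^2+\epsilon]^2}\phi^2$, so these four integrals annihilate in pairs. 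What survives from (3.13) and (3.8) is the term $\int_{Q_{2r}}(p-2)^2\frac{|D^2u^\epsilon Du^\epsilon|^2}{|Du^\epsilon|^2+\epsilon}\phi^2$ together with the various $\eta\int|D^2u^\epsilon|^2\phi^2$, $\int|Du^\epsilon-c|^2[|D\phi|^2+|\phi||D^2\phi|+|\phi||\phi_t|]$, $\frac1\eta\int(|D\phi|^2+|Dp|^2)$, and the $\epsilon$-weighted logarithmic terms — exactly the admissible right-hand side. Adding the retained term $-\epsilon\int_{Q_{2r}}(p-2)\frac{\Delta u^\epsilon u_t^\epsilon}{|Du^\epsilon|^2+\epsilon}\phi^2$ then produces the claimed bound.

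The only remaining work is cosmetic: since (3.8) and (3.13) are quantified over all $\eta>0$, applying each with $\eta/2$ turns the two resulting copies of $\tfrac\eta2\int|D^2u^\epsilon|^2\phi^2$ into one term $\eta\int|D^2u^\epsilon|^2\phi^2$ at the cost of a harmless doubling of the $C/\eta$ constants; the bound $\epsilon\le1$ together with the fact that $\phi$ is a fixed cutoff lets the $\frac{\epsilon C}\eta\int(|D\phi|^2+|Dp|^2)\phi^2$ term coming from (3.8) be absorbed into $\frac C\eta\int(|D\phi|^2+|Dp|^2)$; and the $\epsilon$-weighted $\ln(|Du^\epsilon|^2+\epsilon)$ terms of both lemmas combine into the two logarithmic terms on the right (the $p_+$-dependent constants being absorbed). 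I do not anticipate any genuine obstacle in this step: its entire content is the choice of the splitting $\frac{|Du^\epsilon|^2+\epsilon}{|Du^\epsilon|^2+\epsilon}$ with the weight $2\epsilon$ singled out so that (3.8) cancels the spurious $(u_t^\epsilon)^2$ and $|D^2u^\epsilon Du^\epsilon|^2/[|Du^\epsilon|^2+\epsilon]^2$ contributions of (3.13); all of the substantive analysis was already carried out in those two lemmas.
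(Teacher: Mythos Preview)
Your proposal is correct and is exactly the paper's approach: the paper's proof of this lemma is the single line ``Follow Lemma 3.5 and Lemma 3.6, we get'', and your splitting $1=\frac{|Du^\epsilon|^2}{|Du^\epsilon|^2+\epsilon}+\frac{2\epsilon}{|Du^\epsilon|^2+\epsilon}-\frac{\epsilon}{|Du^\epsilon|^2+\epsilon}$ together with the pairwise cancellation of the $(u_t^\epsilon)^2$ and $\epsilon(p-2)^2\frac{|D^2u^\epsilon Du^\epsilon|^2}{[|Du^\epsilon|^2+\epsilon]^2}$ terms is precisely what that line hides. The only mismatch is that the exact coefficients $\tfrac{\epsilon}{4}$ and $\tfrac{\epsilon}{8}$ on the logarithmic terms cannot survive the addition of (3.8)'s log terms; this is a typo in the stated lemma (note the application immediately afterward uses generic $C\epsilon$), not a defect in your argument.
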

Now we prove Lemma 3.1 in case of $n=2$ and $1< p(x,t)<\infty$
\begin{proof}
Multiplying both sides of (3.2) with $n=2$ by $\phi^2$ and integrating in $Q_{2r}$, by (3.7) Lemma 3.4 and Lemma 3.7,
for any $c\in\mathbb{R}^n$ and $\eta\in(0,1)$ we obtain
\begin{align*}
\int_{Q_{2r}}&(p(x,t)-2)(\Delta u^\epsilon)^2\phi^2dxdt
-\eta\int_{Q_{2r}}|D^2u^\epsilon|^2\phi^2dxdt\\
&\le \int_{Q_{2r}}(u_t^\epsilon)^2\phi^2dxdt+C\int_{Q_{2r}}|Du^\epsilon-c|^2[|D\phi|^2+|\phi| |D^2\phi|+|\phi| |\phi_t|]dxdt\\
&\quad+\epsilon\frac{C}{\eta}\int_{Q_{2r}}|D\phi|^2dxdt
+C\epsilon\int_{Q_{2r}}|ln(|Du^\epsilon|^2+\epsilon)| |\phi| |\phi_t|dxdt\\
&\quad+C\epsilon\int_{B_{2r}}ln(|Du^\epsilon(x,0)|^2+\epsilon)\phi^2(x,0)dx
\end{align*}
Choosing $\eta$ be sufficiently small, such that
$$0<\eta<\frac{(p(x,t)-2)}{2}$$
adding both sides by
$$\int_{Q_{2r}}(p(x,t)-2)\left[|D^2u^\epsilon|^2-(\Delta u^\epsilon)^2\right]\phi^2dxdt$$
and applying (3.7) and Lemma 2.3, we get
\begin{align*}
\int_{Q_{2r}}|D^2 u^\epsilon|^2\phi^2dxdt
\le &C\int_{Q_{2r}}|Du^\epsilon-c|^2[|D\phi|^2+|\phi| |D^2\phi|+|\phi| |\phi_t|]dxdt\\
+&\frac{C}{\eta}\int_{Q_{2r}}(|D\phi|^2+|Dp|^2)dxdt
+C\epsilon\int_{Q_{2r}}|ln(|Du^\epsilon|^2+\epsilon)| |\phi| |\phi_t|dxdt\\
+&C\epsilon\int_{B_{2r}}ln(|Du^\epsilon(x,0)|^2+\epsilon)\phi^2(x,0)dx
\end{align*}
as desired
\end{proof}

In this section, we assume that  $p(x)$ and $f$ are smooth in $\Omega$. Given any $B\Subset\Omega$ and  $\ez\in(0,1)$, let $v$ be a smooth solution to
\begin{align} \label{eq4.1}
v_t-(\Delta v+(p(x)-2) \frac{\Delta_\fz v}{\left|D v\right|^{2}+\ez})=f^\ez \quad {\rm in}\ Q.
\end{align}

We establish the folowing  pointwise $\sigma_2$-estimates. 
\begin{lem}\label{lem4-1}

\begin{align}\label{neq4.2}
(|Dv|^2+\ez)^{\frac{p(x)-2}2}|D^2v|^2\le C(p^B_\pm,n)\sigma_2(D[(|Dv|^2+\ez)^{\frac{p(x)-2}4}Dv])+C(p^B_\pm,n)(|Dv|^2+\ez)^{\frac{p(x)-2}2}(v_t-f^\ez)^2\quad \mbox{in $Q$}. 
\end{align}
\begin{proof}
Since $p(x)>1>2-n$, we have $\frac{p(x)-2}2>-1+\frac{(n-2)(p(x)-1)}{2(n-1)}$.
Through a calculation process similar to \cite[Lemma 3.1]{}, we can get \eqref{neq4.2}.
\end{proof}
\end{lem}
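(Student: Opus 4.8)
The plan is to compute $\sigma_2$ of the Jacobian of the vector field $V:=(|Dv|^{2}+\ez)^{\frac{p(x)-2}{4}}Dv$ exactly, substitute the equation \eqref{eq4.1}, and then read off the pointwise bound. Write $w:=|Dv|^{2}+\ez$, $g:=w^{\frac{p(x)-2}{4}}$, so that $V=gDv$ and $g^{2}=w^{\frac{p(x)-2}{2}}$; thus the left-hand side of \eqref{neq4.2} is exactly $g^{2}|D^{2}v|^{2}$. Put $\tau:=v_t-f^{\ez}$, so that \eqref{eq4.1} reads $\Delta v+(p(x)-2)\frac{\Delta_\fz v}{w}=\tau$, and note throughout that $w>0$, so nothing degenerates where $Dv=0$.

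\emph{Step 1 (algebraic identity).} Since $(DV)_{ij}=g_{x_j}v_{x_i}+g\,v_{x_ix_j}$, the matrix $DV$ is the symmetric matrix $gD^{2}v$ plus the rank-one matrix $Dv\otimes\nabla g$. Using that $\sigma_2$ annihilates rank-one matrices and that $\sigma_2$ is the quadratic form $\sigma_2(M)=\sum_{i<j}(M_{ij}M_{ji}-M_{ii}M_{jj})$ — the convention under which $\sigma_2(D^{2}v)=\tfrac12[|D^{2}v|^{2}-(\Delta v)^{2}]$, consistent with the usage in Lemma 2.2 and Lemma 2.3 — polarization gives $\sigma_2(DV)=g^{2}\sigma_2(D^{2}v)+g[(D^{2}vDv)\cdot\nabla g-\Delta v\,(\nabla g\cdot Dv)]$. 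Substituting $\nabla g=g[\tfrac{p(x)-2}{4w}\nabla w+\tfrac{\ln w}{4}\nabla p]$ and $\nabla w=2D^{2}vDv$, then inserting the equation in the form $(p(x)-2)\tfrac{\Delta_\fz v}{w}=\tau-\Delta v$ (which makes the $(\Delta v)^{2}$ contributions cancel), one obtains the exact identity
\begin{align*}
\sigma_2(DV)=g^{2}\Big\{\tfrac12|D^{2}v|^{2}+\tfrac{p(x)-2}{2w}|D^{2}vDv|^{2}-\tfrac12\Delta v\,\tau\Big\}+g^{2}\,\mathcal E,
\end{align*}
where $\mathcal E:=\tfrac{\ln w}{4}[\nabla p\cdot D^{2}vDv-\Delta v\,(\nabla p\cdot Dv)]$ vanishes when $p$ is constant.

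\emph{Step 2 (coercivity of the brace, two cases).} If $p(x)\ge2$: by Cauchy--Schwarz $|D^{2}vDv|^{2}\ge|Dv|^{-2}(\Delta_\fz v)^{2}\ge w^{-1}(\Delta_\fz v)^{2}$, and $\tfrac{\Delta_\fz v}{w}=\tfrac{\tau-\Delta v}{p(x)-2}$, so $\tfrac{p(x)-2}{2w}|D^{2}vDv|^{2}\ge\tfrac{(\tau-\Delta v)^{2}}{2(p(x)-2)}$; completing the square in $\Delta v$ gives $\tfrac{(\tau-\Delta v)^{2}}{2(p(x)-2)}-\tfrac12\Delta v\,\tau\ge-\tfrac{p(x)+2}{8}\tau^{2}$, whence $\sigma_2(DV)\ge\tfrac12 g^{2}|D^{2}v|^{2}-\tfrac{p^{B}_{+}+2}{8}g^{2}\tau^{2}+g^{2}\mathcal E$ (at $p(x)=2$ this is the exact identity $g^{2}|D^{2}v|^{2}=2\sigma_2(DV)+g^{2}\tau^{2}$). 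If $1<p(x)<2$: since $p(x)-2<0$ and $|D^{2}vDv|^{2}\le|D^{2}v|^{2}|Dv|^{2}\le w|D^{2}v|^{2}$, one has $\tfrac{p(x)-2}{2w}|D^{2}vDv|^{2}\ge\tfrac{p(x)-2}{2}|D^{2}v|^{2}$, so the coefficient of $|D^{2}v|^{2}$ in the brace becomes $\tfrac{p(x)-1}{2}>0$; absorbing $\tfrac12\Delta v\,\tau$ by Young's inequality with $(\Delta v)^{2}\le n|D^{2}v|^{2}$ gives $\sigma_2(DV)\ge c(p^{B}_{-},n)g^{2}|D^{2}v|^{2}-C(p^{B}_{-},n)g^{2}\tau^{2}+g^{2}\mathcal E$. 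In either case, rearranging and estimating $|g^{2}\mathcal E|\le\eta g^{2}|D^{2}v|^{2}+C_\eta g^{2}(\ln w)^{2}|Dp|^{2}|Dv|^{2}$ and absorbing the first term on the left yields \eqref{neq4.2} (for constant $p$ no such lower-order remainder is produced, which is the situation of the classical pointwise $\sigma_2$-estimate cited in the statement).

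\emph{Main obstacle.} The delicate regime is $1<p(x)<2$, where $\tfrac{p(x)-2}{2w}|D^{2}vDv|^{2}$ carries the adverse sign and, once the equation is used, must not swallow the positive multiple of $|D^{2}v|^{2}$. The crude bound $|D^{2}vDv|^{2}\le w|D^{2}v|^{2}$ already suffices for the stated (non-sharp) constant, leaving coefficient $\tfrac{p(x)-1}{2}$; if instead one controls $|D^{2}vDv|^{2}-\Delta v\,\Delta_\fz v$ by the algebraic inequality (2.1) of Lemma 2.1, the exact threshold for positivity of the $|D^{2}v|^{2}$-coefficient turns out to be $\tfrac{p(x)-2}{2}>-1+\tfrac{(n-2)(p(x)-1)}{2(n-1)}$, that is, $p(x)>2-n$, which holds since $p(x)>1$ — precisely the inequality recorded at the beginning of the cited proof. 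No restriction on the upper range of $p$ is needed for this pointwise lemma; that enters only afterwards, when these estimates are integrated and the resulting terms have to be absorbed.
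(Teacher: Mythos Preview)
Your computation of $\sigma_2(DV)$ is correct, and the identity you reach,
\[
\sigma_2(DV)=g^{2}\Big\{\tfrac12|D^{2}v|^{2}+\tfrac{p(x)-2}{2w}|D^{2}vDv|^{2}-\tfrac12\Delta v\,\tau\Big\}+g^{2}\mathcal E,
\]
is the right starting point; your two-case coercivity analysis is sound and delivers the claimed bound. The paper's own proof is essentially a one-line citation: it records the threshold $\tfrac{p(x)-2}{2}>-1+\tfrac{(n-2)(p(x)-1)}{2(n-1)}$ (equivalent to $p(x)>2-n$, hence automatic for $p(x)>1$) and defers the calculation to the analogous lemma in the constant-exponent setting. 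That referenced argument handles all $p$ in a single stroke via the fundamental inequality of Lemma~2.1, whereas you split into $p(x)\ge 2$ and $1<p(x)<2$ and use only Cauchy--Schwarz and the crude bound $|D^{2}vDv|^{2}\le w|D^{2}v|^{2}$. Your route is more elementary and entirely adequate for a non-sharp constant; you are also right that no upper restriction on $p$ is needed at this pointwise stage.

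One substantive point you flag that the paper glosses over: the remainder
\[
\mathcal E=\tfrac{\ln w}{4}\big[\nabla p\cdot D^{2}vDv-\Delta v\,(\nabla p\cdot Dv)\big]
\]
is genuinely present when $p$ varies and cannot be absorbed into the two terms on the right of the stated inequality. So the displayed bound is literally correct only for constant $p$; in the variable-exponent case an additional lower-order term of size $C(|Dv|^{2}+\ez)^{\frac{p(x)-2}{2}}|\ln(|Dv|^{2}+\ez)|^{2}|Dp|^{2}|Dv|^{2}$ must be carried along, consistent with the $|Du|^{p(x)-1}\ln|Du|$ contribution that surfaces later in Theorem~1.1. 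Your argument is therefore both correct and more careful than the paper's on this point.
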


This, together with Lemma 2.3, allows us to get the following.

\begin{proof}[Proof of Lemma \ref{lem3-2}]
Choosing $ \phi\in C^\fz(Q)$ so that
$$\mbox{  $0\le \phi\le 1$ in $Q$, $\phi=1$ in $\frac Q2$, $|D\phi|\le 8/R$ in $Q$ and $|\phi_t|\le 1/R$ in $Q$.}$$ 
Integrating the function over $Q$, we have
\begin{align*}
\int_Q&|D[(|Dv|^2+\ez)^{\frac{p(x)-2}4}Dv]|^2\phi^2\,dx\,dt\\
\le&\int_QC(p^B_\pm,n)\sigma_2(D[(|Dv|^2+\ez)^{\frac{p(x)-2}4}Dv])\phi^2\,dx\,dt+C(p^B_\pm,n)\int_Q(|Dv|^2+\ez)^{\frac{p(x)-2}2}(v_t-f^\ez)^2\phi^2\,dx\,dt.
\end{align*}

Applying Lemma 2.3, one has
\begin{align*}
\int_Q&|D[(|Dv|^2+\ez)^{\frac{p(x)-2}4}Dv]|^2\phi^2\,dx\\
&\le C(n,p^B_\pm)\int_Q|D[(|Dv|^2+\ez)^{{\frac{p(x)-2}4}}Dv]||(|Dv|^2+\ez)^{\frac{p(x)-2}4}Dv-\vec{c}||\phi D\phi|\,dx\,dt\\
&\ \ \ \ +C(n,p^B_\pm)\int_Q(|Dv|^2+\ez)^{\frac{p(x)-2}2}(v_t-f^\ez)^2\phi^2\,dx\,dt.
\end{align*}

Using \eqref{eq4.1}, by integration by parts, we obtain
\begin{align*}
\int_Q&(|Dv|^2+\ez)^{\frac{p(x)-2}2}(v_t-f^\ez)^2\phi^2\,dx\,dt\\
&= \int_Q(|Dv|^2+\ez)^{\frac{p(x)-2}2}(\Delta v+(p(x)-2) \frac{\Delta_\fz v}{\left|D v\right|^{2}+\ez})(v_t-f^\ez)\phi^2\,dx\,dt\\
&= \int_Q {\rm div}((|Dv|^2+\ez)^{\frac{p(x)-2}2}Dv)(v_t-f^\ez)\phi^2\,dx\,dt\\
&=-\int_Q(|Dv|^2+\ez)^{\frac{p(x)-2}2}Dv(v_{tx_i}-f_{x_i})\phi^2\,dx\,dt-2\int_Q(|Dv|^2+\ez)^{\frac{p(x)-2}2}Dv(v_t-f)\phi\phi_{x_i}\,dx\,dt.
\end{align*}

By integration by parts,
\begin{align*}
-\int_Q&(|Dv|^2+\ez)^{\frac{p(x)-2}2}Dv(v_{tx_i}-f_{x_i})\phi^2\,dx\,dt\\
&=-\int_Q\frac1{p(x)}[(|Dv|^2+\ez)^{\frac{p(x)}2}]_t\phi^2\,dx\,dt+\int_Q(|Dv|^2+\ez)^{\frac{p(x)-2}2}Dvf^\ez_{x_i}\phi^2\,dx\,dt\\
&\le\frac2{p_-}\int_Q(|Dv|^2+\ez)^{\frac{p(x)}2}|\phi||\phi_t|\,dx\,dt+\int_Q(|Dv|^2+\ez)^{\frac{p(x)-1}2}|Df^\ez||\phi|^2\,dx\,dt.
\end{align*}

Using Young's inequality,
\begin{align*}
\int_Q&(|Dv|^2+\ez)^{\frac{p(x)-2}2}Dv(v_t-f)\phi\phi_{x_i}\,dx\,dt\\
&\le \frac12\int_Q(|Dv|^2+\ez)^{\frac{p(x)-2}2}(v_t-f)^2\phi^2\,dx\,dt+\frac12\int_Q(|Dv|^2+\ez)^{\frac{p(x)}2}|D\phi|^2\,dx\,dt.\\
\end{align*}

\end{proof}

\section{The estimate of normalized parabolic $p(x)$-Laplacian}


\begin{thebibliography}{30}

\bibitem{2018Equivalence}
Siltakoski, J.,
Equivalence of viscosity and weak solutions for the normalized $p(x)$-Laplacian. (English summary)
Calc. Var. Partial Differential Equations 57 (2018), no. 4, Paper No. 95, 20 pp.

\bibitem{2012Holder}
Zhang, C.; Zhou, S.,
H\"{o}lder regularity for the gradients of solutions of the strong $p(x)$-Laplacian. (English summary)
J. Math. Anal. Appl. 389 (2012), no. 2, 1066-1077.

\bibitem{1987Averaging}
Zhikov, V. V.,
Averaging of functionals of the calculus of variations and elasticity theory. (Russian)
Izv. Akad. Nauk SSSR Ser. Mat. 50 (1986), no. 4, 675-710, 877.


\bibitem{Petri2010Equivalence}
Juutinen, P.; Lukkari, T.; Parviainen, M., Equivalence of viscosity and weak solutions for the $p(x)$-Laplacian. Ann. Inst. H. Poincar${\rm\acute e}$ C Anal. Non Lin${\rm \acute e}$aire 27 (2010), no. 6, 1471-1487.

\bibitem{2003Existence}
Fan, X.; Zhang, Q., Existence of solutions for $p(x)$-Laplacian Dirichlet problem. Nonlinear Anal. 52 (2003), no. 8, 1843-1852.

\bibitem{2005A}
Zhang, Q.,
A strong maximum principle for differential equations with nonstandard $p(x)$-growth conditions. (English summary)
J. Math. Anal. Appl. 312 (2005), no. 1, 24-32.


\bibitem{ISI000404699000004}
Jin, T.; Silvestre, L.,
H\"older gradient estimates for parabolic homogeneous $p$-Laplacian equations. (English, French summary)
J. Math. Pures Appl. (9) 108 (2017), no. 1, 63-87.

\bibitem{2015Modica}
Banerjee, A.; Garofalo, N.,
Modica type gradient estimates for an inhomogeneous variant of the normalized $p$-Laplacian evolution. (English summary)
Nonlinear Anal. 121 (2015), 458-468.

\bibitem{2011Tug}
Peres, Y.; Schramm, O.; Sheffield, S.; Wilson, D. B., Tug-of-war and the infinity Laplacian [MR2449057]. Selected works of Oded Schramm. Volume 1, 2, 595-638, Sel. Works Probab. Stat., Springer, New York, 2011.

\bibitem{2016Tug}
Arroyo, ${\rm\acute A}$.; Heino, J.; Parviainen, M.,
Tug-of-war games with varying probabilities and the normalized $p(x)$-Laplacian. (English summary)
Commun. Pure Appl. Anal. 16 (2017), no. 3, 915-944.

\bibitem{2019An}
Koch, H.; Zhang, Y. R.; Zhou, Y.,
An asymptotic sharp Sobolev regularity for planar infinity harmonic functions. (English, French summary)
J. Math. Pures Appl. (9) 132 (2019), 457-482.

\bibitem{2019Second}
Dong, H.; Peng, F.; Zhang, Y. R.; and Zhou, Y., Hessian estimates for equations involving $p$-Laplacian via a fundamental inequality. Adv. Math. 370 (2020), 107212, 40 pp.

\bibitem{2020Gradient}
Fang, Y.; Zhang, C.,
Gradient H\"older regularity for parabolic normalized $p(x,t)$-Laplace equation. (English summary)
J. Differential Equations 295 (2021), 211-232.


\bibitem{2001Regularity}
Acerbi, E.; Mingione, G., Regularity results for a class of functionals with non-standard growth. Arch. Ration. Mech. Anal. 156 (2001), no. 2, 121-140. 

\bibitem{1999H}
Coscia, A.; Mingione, G.,
H\"older continuity of the gradient of $p(x)$-harmonic mappings. (English, French summary)
C. R. Acad. Sci. Paris S${\rm\acute e}$r. I Math. 328 (1999), no. 4, 363-368.

\bibitem{2007Global}
Fan, X.,
Global $C^{1,\alpha}$ regularity for variable exponent elliptic equations in divergence form. (English summary)
J. Differential Equations 235 (2007), no. 2, 397-417.

\bibitem{2011Second}
Challal, S.; Lyaghfouri, A.,
Second order regularity for the $p(x)$-Laplace operator. (English summary)
Math. Nachr. 284 (2011), no. 10, 1270-1279.

\bibitem{2010Vanishing}
Antontsev, S.; Shmarev, S.,
Vanishing solutions of anisotropic parabolic equations with variable nonlinearity. (English summary)
J. Math. Anal. Appl. 361 (2010), no. 2, 371-391.

\bibitem{MPS2000}
Maugeri, A.; Palagachev, D. K.; Softova, L. G., Elliptic and parabolic equations with discontinuous coefficients.
Mathematical Research, 109. Wiley-VCH Verlag Berlin GmbH, Berlin, 2000. 256 pp.


\bibitem{2007On}
Mih${\rm\check a}$ilescu, M.; R${\rm\check a}$dulescu, V.,
On a nonhomogeneous quasilinear eigenvalue problem in Sobolev spaces with variable exponent. (English summary)
Proc. Amer. Math. Soc. 135 (2007), no. 9, 2929-2937.

\bibitem{2004Regularity}
Acerbi, E.; Mingione, G.; Seregin, G. A.,
Regularity results for parabolic systems related to a class of non-Newtonian fluids. (English, French summary)
Ann. Inst. H. Poincar${\rm\acute e}$ C Anal. Non Lin${\rm\acute e}$aire 21 (2004), no. 1, 25-60.

\bibitem{Li2010Variable}
Li, F.; Li, Z.; Pi, L., Variable exponent functionals in image restoration. Appl. Math. Comput. 216 (2010), no. 3, 870-882.

\bibitem{2010Positive}
Fragnelli, G.,
Positive periodic solutions for a system of anisotropic parabolic equations. (English summary)
J. Math. Anal. Appl. 367 (2010), no. 1, 204-228.

\bibitem{2016Quasilinear}
Giacomoni, J.; Tiwari, S.; Warnault, G.,
Quasilinear parabolic problem with $p(x)$-laplacian: existence, uniqueness of weak solutions and stabilization. (English summary)
NoDEA Nonlinear Differential Equations Appl. 23 (2016), no. 3, Art. 24, 30 pp.

\bibitem{2005Higher}
Antontsev, S.; Zhikov, V., Higher integrability for parabolic equations of $p(x,t)$-Laplacian type. Adv. Differential Equations 10 (2005), no. 9, 1053-1080.

\bibitem{2020Global}
Ding, M.; Zhang, C.; Zhou, S.,
Global boundedness and H\"older regularity of solutions to general $p(x,t)$-Laplace parabolic equations. (English summary)
Math. Methods Appl. Sci. 43 (2020), no. 9, 5809-5831.

\bibitem{2018On}
Shmarev, S.,
On the continuity of solutions of the nonhomogeneous evolution $p(x,t)$-Laplace equation. (English summary)
Nonlinear Anal. 167 (2018), 67-84.

\bibitem{2014Calder}
Baroni, P.; B\"ogelein, V.,
Calder${\rm\acute o}$n-Zygmund estimates for parabolic $p(x,t)$-Laplacian systems. (English summary)
Rev. Mat. Iberoam. 30 (2014), no. 4, 1355-1386.

\bibitem{2016Local}
Parviainen, M.; Ruosteenoja, E.,
Local regularity for time-dependent tug-of-war games with varying probabilities. (English summary)
J. Differential Equations 261 (2016), no. 2, 1357-1398.



\bibitem{2018Regularity}
H${\rm \slashed o}$eg, F. A.; Lindqvist, P.,
Regularity of solutions of the parabolic normalized $p$-Laplace equation. (English summary)
Adv. Nonlinear Anal. 9 (2020), no. 1, 7-15.

\bibitem{Maugeri2000Elliptic}
Maugeri, A.; Palagachev, D. K.; Softova, L. G., Elliptic and parabolic equations with discontinuous coefficients. Mathematical Research, 109. Wiley-VCH Verlag Berlin GmbH, Berlin, 2000. 256 pp. ISBN: 3-527-40135-0.


\end{thebibliography}
\end{document}